\newlength{\@thlabel@width}%
\newcommand{\thmenumhspace}{\settowidth{\@thlabel@width}{\upshape(i)}\sbox{\@labels}{\unhbox\@labels\hspace{\dimexpr-\leftmargin+\labelsep+\@thlabel@width-\itemindent}}}
\theoremstyle{definition}
\newtheorem{definition}{Definition}[section]
\newtheorem{conjecture}[definition]{Conjecture}
\theoremstyle{plain}
\newtheorem{theorem}[definition]{Theorem}
\newtheorem{proposition}[definition]{Proposition}
\newtheorem{corollary}[definition]{Corollary}
\newtheorem{lemma}[definition]{Lemma}
\theoremstyle{remark}
\newtheorem{remark}[definition]{Remark}
\title[Exterior and interior Whitehead products]{On the higher order exterior and interior Whitehead products}
\author{Marek Golasi\'nski}
\address{Institute of Mathematics, Casimir the Great University, pl.\ Weyssenhoffa 11 85--072, Bydgoszcz, Poland}
\email{marek@ukw.edu.pl}
\author[Thiago de Melo]{Thiago de Melo\textsuperscript{*}}
\address{Instituto de Geoci\^encias e Ci\^encias Exatas\\ UNESP--Univ Estadual Paulista\\ Av.~24A, 1515, Bela Vista. CEP 13.506--900. Rio Claro--SP, Brazil}
\email{tmelo@rc.unesp.br}
\thanks{\textsuperscript{*}Supported by FAPESP 2014/21926-1}
\subjclass[2010]{Primary 55Q15; secondary 55Q25, 55S15}
\keywords{higher order Whitehead product, Hopf construction, Hopf--James invariant, James construction, symmetric product}
\begin{document}\baselineskip=1.5em

\begin{abstract}We extend the notion of the exterior Whitehead product for maps $\alpha_i:\Sigma A_i \to X_i$ for $i=1,\dots,n$,
where $\Sigma A_i$ is the reduced suspension of $A_i$ and then, for the interior product with $X_i=J_{m_i}(X)$ as well.
The main result stated in Theorem~\ref{thm:main} generalizes \cite[Theorem~1.10]{hardie3} and concerns
to the Hopf invariant of the generalized Hopf construction.

We close the paper applying the Gray's construction $\circ$ (called the Theriault product) to a sequence $X_1,\dots,X_n$ of simply connected co-$H$-spaces to obtain
a higher Gray--Whitehead product map \[w_n:\Sigma^{n-2}(X_1\circ\dots\circ X_n)\to T_1(X_1,\dots,X_n),\] where $T_1(X_1,\dots,X_n)$ is the fat wedge  of $X_1,\dots,X_n$.
\end{abstract}

\maketitle

\section*{Introduction} Porter \cite{porter} has generalized the Hardie's construction from \cite{hardie1} and introduced the notion of
the $n$\textsuperscript{th} order generalized Whitehead product of maps $\alpha_i:\Sigma A_i\to X$
for $i=1,\dots ,n$, with $n\geq 2$ which is very useful in many mathematical constructions. For example, given a simplicial complex $K$ on $n$ vertices,
Davis and Januszkiewicz \cite{dj} have associated two fundamental objects of toric topology: the moment--angle complex
$\mathcal{Z}_K$ and the Davis--Januszkiewicz space $DJ_K$. The homotopy fibration sequence
\[\mathcal{Z}_K\xrightarrow{\tilde{\omega}}DJ_K\to\textstyle\prod\limits^n_{i=1}\mathbb{C}P^\infty\] and its generalization have been studied
in \cite{gt} and \cite{kish}, respectively to show that $\tilde{\omega} : \mathcal{Z}_K\to DJ_K$ is a sum of higher and iterated Whitehead products
for appropriate complexes $K$.

Next, let $\mathcal{F}(\mathbb{R}^{n+1},m)$ be the Euclidean ordered configuration space.
By  Salvatore \cite[Theorem~7]{salvatore}, the homotopy type of $\mathcal{F}(\mathbb{R}^{n+1},m)$ for $n \geq 2$
admits a minimal cellular model
\[\ast=X_0\subseteq X_n\subseteq X_{2n}\subseteq\dots \subseteq X_{mn}\]
whose cells are attached via higher order Whitehead products.

Hardie \cite{hardie1} has made use of the reduced product spaces $J_m(X)$, defined by James in \cite{james}, to study the interior Whitehead product
of maps $\alpha_i:\mathbb{S}^{k_i}\to J_{m_i}(X)$ with $k_i,m_i\geq 1$ for $i=1,\dots,n$ as an element  $\langle\alpha_1,\dots,\alpha_n\rangle\in\pi_{k-1}(J_m(X))$,
where $k=k_1+\dots+k_n$ and
$m=m_1+\dots+m_n-\min\limits_{1\leq i\leq n}\{m_i\}$. In addition, by means of the generalized Hopf construction from \cite{hardie3},
for a given map $F:\mathbb{S}^{k_1}\times\dots\times\mathbb{S}^{k_n}\to J(X)$ strongly of type $(\alpha_1,\dots,\alpha_n)^{m-1}$, Hardie has defined
an element $c(F)\in \pi_{k+1}(\Sigma X)$ and in particular, an element of order $p$ in $\pi_{2p}(\mathbb{S}^3)$ analysed in \cite{hardie1}.

The main result on the triple spherical Whitehead product from \cite{hardie} has been generalized in \cite{marek-thiago} into suspensions.
After necessary prerequisites exhibited in Section~\ref{sec.james}, we extend the notion of the exterior Whitehead product for maps $\alpha_i:\Sigma A_i \to X_i$ for $i=1,\dots,n$,
where $\Sigma A_i$ is the reduced suspension of $A_i$ and then, for the interior product with $X_i=J_{m_i}(X)$ as well. Next, some properties of these products are presented in Section~\ref{sec.ext.int}.

James has shown \cite{james2} that, with the exception of the toric constructions of Toda \cite{toda}, the usual procedures for the construction
of generators of homotopy groups of spheres give rise to no more elements that can be obtained by the Hopf construction together with the operation
of composition (the Toda bracket).
We follow Hardie \cite{hardie3} to adapt the necessary results on generalized
Hopf construction. First, we list in Section~\ref{subsec.separation} some properties of the separation map $d(u,v):\Sigma A\to Y$ of maps $u,v:C_f\to Y$ defined on the mapping cone $C_f$ and studied in
\cite{tsuchida}, for a given map $f:A\to X$.
Then, the main result stated in Theorem~\ref{thm:main} generalizes \cite[Theorem 1.10]{hardie3} and concerns
to the Hopf invariant of the generalized Hopf construction.

Recently, Gray has defined in \cite{gray} a functor $\circ$ (called the Theriault product) in the category $\mathcal{CO}$ of simply-connected co-$H$-spaces
and co-$H$-maps, and also a natural transformation $X\circ Y \to X\vee Y$ generalizing the Whitehead product map.

In Section~\ref{subsec.misc}, we close the paper making use of the Gray's construction \cite{gray} to a sequence $X_1,\dots,X_n$ of simply-connected co-$H$-spaces and a fiber sequence
from \cite{porter3} (as in the paper \cite{gt}) to obtain a higher Gray--Whitehead product map $w_n:\Sigma^{n-2}(X_1\circ\dots\circ X_n)\to T_1(X_1,\dots,X_n)$, where $T_1(X_1,\dots,X_n)$ is
the fat wedge of $X_1,\dots,X_n$. The map $w_n$ is used to introduce higher order Whitehead product for maps
defined on co-$H$-spaces. Its basic properties and applications extending those from \cite{gray} will be presented in a forthcoming paper.

Finally, we discuss some connections, via the interior Whitehead product, of the James construction with the symmetric product and then with projective spaces $\mathbb{F}P^n$,
for $\mathbb{F}=\mathbb{R},\mathbb{C}$ or $\mathbb{H}$ as well.

\subsubsection*{Acknowledgement} The authors would like to warmly thank Michael J.\ Hopkins for his helpful conversations
on the maps $J_n(\mathbb{S}^2)\to \mathbb{C}P^n$ studied in Section~\ref{subsec.misc}.
The first author is indebted to the Institute for Mathematical Sciences, the National University of Singapore for a support to attend the Conference \textit{Combinatorial and Toric Homotopy} (Singapore, August 23--29, 2015) in honour of Frederick Cohen and present main results of this paper.

\section{James construction and Hopf invariant}\label{sec.james} In this work all spaces are based, have the homotopy type of a $CW$-complex and we do not distinguish between a based map and its homotopy class.
Given a well-pointed Hausdorff space $(X,\ast)$, the James construction $J(X)$ is the free associative monoid on $X$ with $\ast$ as unit.
More precisely, from \cite{james}, for each $n\geq 1$ let $J_n(X)$ be the quotient of $X\times \overset{\times n}{\dotsb{}}\times X$,
where $(x_1,\dots,x_n)\sim (x'_1,\dots,x'_n)$ provided they are equal after removing any occurrence of $\ast$.  Then,
$J(X)=\operatorname{\underrightarrow{\lim}} J_n(X)$, where $X=J_1(X)\subseteq J_2(X)\subseteq\dots\subseteq J_n(X)\subseteq J_{n+1}(X)\subseteq\cdots$
is the James filtration. Given $f:X\to Y$, there are maps $J_n(f): J_n(X)\to J_n(Y)$ for $n\ge 1$ and
$J(f) : J(X)\to J(Y)$. Further, there are natural multiplication maps \[\mu_{m,n}(X): J_m(X)\times J_n(X)\to J_{m+n}(X)\]
and \[\mu(X) : J(X)\times J(X)\to J(X)\] defined by the juxtaposition.

Let $\eta_X : X\to\Omega\Sigma X$ and $\varepsilon_X : \Sigma\Omega X\to X$ be the canonical maps determined by the pair of adjoint functors $\Sigma$
and $\Omega$.

By \cite[Chapter~VII]{wh}, there is a canonical multiplicative extension
\[\xymatrix@C=1.5cm{X\ar[r]^{\eta_X} \ar@{^{(}->}[d] & \Omega\Sigma X\ar[d]^\simeq\\
J(X)\ar@{-->}[r]^{\bar{\phi}_X} & \Omega'\Sigma X \rlap{,} }  \]
where $\Omega'X$ denotes the Moore loop space of $X$.
Consequently, there is a homotopy equivalence $\phi_X:J(X)\xrightarrow{\simeq} \Omega\Sigma X$ with the adjoint map $\psi_X:\Sigma J(X)\to \Sigma X$.
Writing $\pi(X,Y)$ for the set of homotopy classes of maps from $X$ to $Y,$ we have a commutative diagram
\[\xymatrix@C=1.5cm{
\pi(A,J(X)) \ar[r]^-{\phi_{X\ast}} \ar[d]_{\Sigma} & \pi(A,\Omega\Sigma X)  \ar[d]^{\operatorname{Adj}}_\approx \\
\pi(\Sigma A,\Sigma J(X)) \ar[r]^-{\psi_{X\ast}} & \pi(\Sigma A,\Sigma X)} \]
for any pointed space $A$.
Also, by \cite{james} there is a homotopy equivalence \[\Sigma\Omega\Sigma X \xrightarrow{\simeq} \textstyle\bigvee\limits_{m\geq 1} \Sigma X^{\wedge m},\]
where $X^{\wedge m}$ denotes the iterated smash product $X\wedge\overset{\times m}{\dotsb{}}\wedge X$. Then, for each $m\geq 1$ consider the composition \[ \Sigma\Omega\Sigma X \xrightarrow{\simeq}  \textstyle\bigvee\limits_{m\geq 1} \Sigma X^{\wedge m} \xrightarrow{p_m}
\Sigma X^{\wedge m} \] where $p_m: \bigvee\limits_{m\geq 1}\Sigma X^{\wedge m}\to \Sigma X^{\wedge m}$ is the projection map.

By the adjointness, we obtain the \textit{$m$\textsuperscript{th} Hopf--James invariant} \[\mathcal{H}_m : \Omega \Sigma X \to \Omega\Sigma X^{\wedge m} \] which
induces (again by the adjointness)
a map \begin{equation}h_m: \pi(\Sigma A,\Sigma X)\to \pi(\Sigma A,\Sigma X^{\wedge m})\label{eq.gen.hopf}\end{equation} for any pointed space $A$.

Recall that:
\begin{enumerate}[label={\textup{\arabic*.}}]
\item $\varepsilon_{\Sigma X}\Sigma\eta_X=\mathrm{id}_{\Sigma X}$;
\item $\psi_X=\varepsilon_{\Sigma X}\Sigma\phi_X$;
\item $\phi_X=(\Omega\psi_X)\eta_{J(X)}$;
\item $\phi_Xj_X=\eta_X$;
\item $\psi_X\Sigma j_X=\mathrm{id}_{\Sigma X}$,
\end{enumerate}
for the embedding map $j_X : X\hookrightarrow J(X)$.

Now, let $q : J(X)\to J(X^{\wedge m})$ be the combinatorial extension (see e.g., \cite{bs} for its construction) of the quotient map
$q_m : (J_m(X),J_{m-1}(X))\to (X^{\wedge m},\ast)$ which collapses $J_{m-1}(X)$ to $\ast$ for $m\geq 1$. Given a map $f : \Sigma A\to \Sigma X$,
we write $\tilde{f}=(\Omega f)\eta_A$ and get the following formula for the Hopf--James invariant:
\begin{equation}\label{for}
h_m(f)=\varepsilon_{\Sigma X^{\wedge m}}(\Sigma\phi_{X^{\wedge m}}q\phi_X^{-1})(\Sigma\Omega f)\Sigma \eta_A=\psi_{X^{\wedge m}}(\Sigma q\phi_X^{-1})\Sigma\tilde{f}.
\end{equation}

\section{Generalized exterior and interior products}\label{sec.ext.int}
Given the spaces $X_1,\dots,X_n$, write  $\underline{X}=(X_1,\dots,X_n)$. Let $T_i(\underline{X})$ be the subspace of $X_1\times\dots\times X_n$ consisting of those points with
at least $i$ coordinates at base points. In particular, $T_0(\underline{X})=X_1\times\dots\times X_n$ and $T_{n-1}(\underline{X})=X_1\vee\dots\vee X_n$. 
Denote by $\Lambda(\underline{X})$ the smash product $X_1\wedge\dots\wedge X_n$. Notice that for or maps $\alpha_i : X_i\to Y_i$  with $i=1,\dots,n$, the induced 
map $\alpha_1\times\dots\times \alpha_n : T_0(\underline{X})\to T_0(\underline{Y})$ restricts to maps
\[T_i(\underline{\alpha}) : T_i(\underline{X})\to T_i(\underline{Y})\]
for $i=1,\dots,n$, where $\underline{\alpha}=(\alpha_1,\dots,\alpha_n)$.

Given a space $X$, write $CX$ for the cone of $X$ and notice that there is a canonical embedding $\iota_X: X\hookrightarrow CX$.
According to \cite[Theorem~(2.3)]{porter} there is a (up to homotopy) pushout
\begin{equation}
\begin{aligned}
\xymatrix{
\Sigma^{n-1}\Lambda(\underline{A}) \ar@{^{(}->}[d] \ar[r]^-{\omega_n} & T_1(\underline{\Sigma A})\ar@{^{(}->}[d] \\
C\Sigma^{n-1}\Lambda(\underline{A}) \ar[r]^-{\Omega_n} & T_0(\underline{\Sigma A})\rlap{.}
}
\end{aligned}\label{diag.omega}
\end{equation}
Hence, we have the cofibre sequence \[\Sigma^{n-1}\Lambda(\underline{A})\xrightarrow{\omega_n}T_1(\underline{\Sigma A})\hookrightarrow T_0(\underline{\Sigma A})\]
and write
\[\nu_{\omega_n}: T_1(\underline{\Sigma A})\cup_{\omega_n} C\Sigma^{n-1}\Lambda(\underline{A})\xrightarrow{\simeq} T_0(\underline{\Sigma A})\]
for a homotopy equivalence. This yields the commutative (up to homotopy) square
\begin{equation}
\begin{gathered}
\xymatrix{
C\Sigma^{n-1}\Lambda(\underline{A}) \ar[d]_{p} \ar[r]^-{\Omega_n} & T_0(\underline{\Sigma A}) \ar[d]^{s} \\
\Sigma^{n}\Lambda(\underline{A}) \ar[r]^-{\lambda} & \Lambda(\underline{\Sigma A})\rlap{,} }
\end{gathered}\label{diag.lambda}
\end{equation}
where $\lambda:\Sigma^n\Lambda(\underline{A})\xrightarrow{\simeq} \Lambda(\underline{\Sigma A})$ is a homotopy equivalence.
\begin{definition}\label{def.ext.prod}The \textit{exterior Whitehead product} $\{\alpha_1,\dots,\alpha_n\}$ of maps $\alpha_i: \Sigma A_i \to X_i$ for
$i=1,\dots,n$ with $n\geq 2$ is the composition \[T_1(\underline{\alpha})  \omega_n :\Sigma^{n-1}\Lambda(\underline{A})\to T_1(\underline{X}).\]
\end{definition}
If $A_i=\mathbb{S}^{m_i}$ is the $m_i$-sphere with $m_i\geq 1$ for $i=1,\dots,n$ then $\{\alpha_1,\dots,\alpha_n\}$ has been defined by Hardie in \cite{hardie1}. In the sequel, we refer to such a product as the \textit{spherical} one.

From the above, we derive that $\{\alpha_1,\dots,\alpha_n\}=0$ if and only if there is a map $T' : T_0(\underline{\Sigma A})\to T_1(\underline{X})$
such that the triangle
\[\xymatrix@C=1.5cm{T_1(\underline{\Sigma A})\ar@{^{(}->}[d] \ar[r]^-{T_1(\underline{\alpha})} & T_1(\underline{X}) \\
T_0(\underline{\Sigma A}) \ar[ru]_-{T'}
}
\]
commutes (up to homotopy).

\begin{proposition}Let $\alpha_i:\Sigma A_i \to X_i$ be maps for $i=1,\dots,n$.
\begin{enumerate}[label={\textup{(\arabic*)}}]
\item If $\alpha_{i_0}=0$ for some $1\le i_0\le n$ then
 \[\{\alpha_1,\dots,\alpha_{i_0-1},0,\alpha_{i_0+1},\dots,\alpha_n\}=0;\]

\item  $\Sigma_\ast \{\alpha_1,\dots,\alpha_n\}=0$, where $\Sigma_\ast$ is the suspension homomorphism.
\end{enumerate}
\end{proposition}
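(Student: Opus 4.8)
The plan is to treat the two items separately, in both cases via the criterion recorded just above the statement: $\{\alpha_1,\dots,\alpha_n\}=0$ if and only if $T_1(\underline{\alpha})$ admits an extension $T':T_0(\underline{\Sigma A})\to T_1(\underline{X})$ over the inclusion $T_1(\underline{\Sigma A})\hookrightarrow T_0(\underline{\Sigma A})$, up to homotopy.

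\emph{Item (1).} Suppose $\alpha_{i_0}=0$ and write $\underline{\alpha}=(\alpha_1,\dots,\alpha_{i_0-1},0,\alpha_{i_0+1},\dots,\alpha_n)$. The product map $\alpha_1\times\dots\times 0\times\dots\times\alpha_n:T_0(\underline{\Sigma A})=\Sigma A_1\times\dots\times\Sigma A_n\to X_1\times\dots\times X_n$ has $i_0$-th coordinate identically the base point, so its image lies in the subspace $\{x:x_{i_0}=\ast\}\subseteq T_1(\underline{X})$. Corestricting, we obtain a map $T':T_0(\underline{\Sigma A})\to T_1(\underline{X})$, and by the very definition of $T_1(\underline{\alpha})$ as the restriction of this same product map we have $T'|_{T_1(\underline{\Sigma A})}=T_1(\underline{\alpha})$ on the nose. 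The extension criterion then gives $\{\alpha_1,\dots,0,\dots,\alpha_n\}=0$; in fact no homotopy is needed here.

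\emph{Item (2).} Since $\Sigma$ is a functor, $\Sigma_{\ast}\{\alpha_1,\dots,\alpha_n\}=\Sigma\bigl(T_1(\underline{\alpha})\,\omega_n\bigr)=\Sigma T_1(\underline{\alpha})\circ\Sigma\omega_n$, so it suffices to show $\Sigma\omega_n\simeq 0$. From the homotopy pushout~(\ref{diag.omega}), $T_0(\underline{\Sigma A})$ is the mapping cone of $\omega_n$, which yields the cofibre sequence $\Sigma^{n-1}\Lambda(\underline{A})\xrightarrow{\omega_n}T_1(\underline{\Sigma A})\xrightarrow{i}T_0(\underline{\Sigma A})$, in particular $i\,\omega_n\simeq 0$. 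After one suspension, $\Sigma i:\Sigma T_1(\underline{\Sigma A})\to\Sigma T_0(\underline{\Sigma A})$ has a left homotopy inverse $r$: indeed $\Sigma T_0(\underline{\Sigma A})=\Sigma(\Sigma A_1\times\dots\times\Sigma A_n)$ carries the classical wedge decomposition $\bigvee_{\emptyset\ne S\subseteq\{1,\dots,n\}}\Sigma\bigwedge_{i\in S}\Sigma A_i$, this decomposition is compatible with the fat-wedge filtration so that $\Sigma i$ corresponds to the inclusion of the sub-wedge over the proper subsets $S$, and $r$ is the projection collapsing the top summand $\Sigma\bigwedge_{i=1}^n\Sigma A_i\simeq\Sigma\Lambda(\underline{\Sigma A})$. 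Hence $\Sigma\omega_n\simeq r\,(\Sigma i)\,(\Sigma\omega_n)=r\,\Sigma(i\,\omega_n)\simeq 0$, and therefore $\Sigma_{\ast}\{\alpha_1,\dots,\alpha_n\}=0$.

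The only ingredient that is not pure diagram chasing is the compatibility of the suspension-of-a-product splitting with the subspace $T_1(\underline{\Sigma A})$, i.e.\ the fact that $\Sigma i$ is a split inclusion; this is the precise reason a (generalized) Whitehead product map suspends trivially, and it is where I expect the real content to sit, although it is a standard fact already available in the circle of ideas around \cite{porter}. Everything else — the extension criterion for item (1), and the cofibre sequence extracted from~(\ref{diag.omega}) together with the identity $i\,\omega_n\simeq 0$ for item (2) — is formal.
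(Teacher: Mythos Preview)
Your proof is correct and matches the paper's approach: for item~(1) the paper constructs the very same extension $T'$ (phrased as $T'(a_1,\dots,a_n)=T_1(\underline{\alpha})(\ast,a_2,\dots,a_n)$ after reducing to $i_0=1$), and for item~(2) the paper simply cites \cite[Corollary~(4)]{porter}, whose content is exactly your argument that $\Sigma i$ admits a retraction and hence $\Sigma\omega_n\simeq 0$---indeed the paper spells out this same reasoning later in Section~\ref{sec.hopf}.
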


\begin{proof} (1): In virtue of Proposition~\ref{prop.ext}\ref{prop.ext.permut}, we can suppose that $i_0=1$.
Define $T': T_0(\underline{\Sigma A}) \to T_1(\underline{X})$ by $T'(a_1,\dots ,a_n)= T_1(\underline{\alpha})(\ast,a_2,\dots ,a_n)$
for any $(a_1,\dots,a_n)\in T_0(\underline{\Sigma A})$. Then, $T'$ is an extension (up to homotopy) of
$T_1(\underline{\alpha})$ and the result follows.

(2): this is a direct consequence of \cite[Corollary~(4)]{porter}.
\end{proof}

We present below some further straightforward properties of the exterior Whitehead product and follow \cite{hardie1} to generalize 
the interior one. First, in view of \cite[Definition~(2.10)]{porter}, we say that two maps $f,g:T_1(\underline{\Sigma A})\to X$ are 
\textit{compatible off} the $i$\textsuperscript{th} coordinate if they coincide on $T_0^{(i)}(\underline{\Sigma A})$, where $T_0^{(i)}(\underline{\Sigma A})=\Sigma A_1\times\dots\times \Sigma A_{i-1}\times \Sigma A_{i+1}\times \dots\times \Sigma A_n$ is canonically embedded into $T_1(\underline{\Sigma A})$ for $i=1,\dots,n$.

In addition, if $A_{i_0}$ is a co-$H$-group with a comultiplication $\nu_{i_0} :A_{i_0}\to A_{i_0}\vee A_{i_0}$ then we follow \cite[Definition~(2.11)]{porter}
to define:
\begin{multline*}
(f+^{(i_0)}g)((t_1,a_1),\dots ,(t_{i_0},a_{i_0}),\dots ,(t_n,a_n))\\
=\begin{cases}
f((t_1,a_1),\dots ,(t_{i_0},a'_{i_0}),\dots ,(t_n,a_n)), & \text{if $\nu_{i_0}(a_{i_0})=(a'_{i_0},\ast)$}; \\[1ex]
g((t_1,a_1),\dots ,(t_{i_0},a''_{i_0}),\dots ,(t_n,a_n)), & \text{if $\nu_{i_0}(a_{i_0})=(\ast,a''_{i_0})$.}
\end{cases}
\end{multline*}

Suppose that there are maps
$T_1(\underline{\alpha}'),T_1(\underline{\alpha}''):T_1(\underline{\Sigma A})\to T_1(\underline{X})$ with $\underline{\alpha}'=(\alpha_1,\dots,\alpha'_{i_0},\dots,\alpha_n)$
and $\underline{\alpha}''=(\alpha_1,\dots,\alpha''_{i_0},\dots,\alpha_n)$ which are clearly compatible off the $i_0$\textsuperscript{th} coordinate.
Then, $T_1(\underline{\alpha}')+^{(i_0)}T_1(\underline{\alpha}'')$ is defined and by means of an appropriate version of \cite[Theorem~(2.13)]{porter}, we can state:

\begin{proposition}If $\alpha_i : \Sigma A_i\to X_i$ are maps for $i=1,\dots,n$ and $A_{i_0}$ is a co-$H$-group for some $1\le i_0\le n$ then the exterior Whitehead product satisfies:
\begin{enumerate}[label={\textup{(\arabic*)}}]
\item\label{prop.linear} \(\{\alpha_1,\dots,\alpha'_{i_0},\dots,\alpha_n\}+\{\alpha_1,\dots,\alpha''_{i_0},\dots,\alpha_n\}= \{\alpha_1,\dots,\alpha'_{i_0}+\alpha''_{i_0},\dots,\alpha_n\}\);
\item\label{prop.linear.minus} $\{\alpha_1,\dots,-\alpha_{i_0},\dots,\alpha_n\}=-\{\alpha_1,\dots,\alpha_n\}$.
\end{enumerate}\label{prop.lin.exterior}
\end{proposition}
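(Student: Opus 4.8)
The plan is to obtain both assertions from the behaviour of the map $\omega_n$ of diagram~\eqref{diag.omega} under the operation $+^{(i_0)}$, i.e.\ from a suitable version of \cite[Theorem~(2.13)]{porter}, once the co-$H$ bookkeeping has been matched up. Throughout one uses that the exterior product depends only on the homotopy classes of the $\alpha_i$, which is clear from Definition~\ref{def.ext.prod}.

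First I would record the elementary point that, since $\underline{\alpha}'$ and $\underline{\alpha}''$ differ from $\underline{\alpha}$ only in the $i_0$\textsuperscript{th} entry, the maps $T_1(\underline{\alpha}')$ and $T_1(\underline{\alpha}'')$ restrict to one and the same map on $T_0^{(i_0)}(\underline{\Sigma A})$ (namely $\prod_{j\ne i_0}\alpha_j$ followed by the inclusion into $T_1(\underline{X})$), so they are compatible off the $i_0$\textsuperscript{th} coordinate and $T_1(\underline{\alpha}')+^{(i_0)}T_1(\underline{\alpha}'')$ is defined using the comultiplication $\nu_{i_0}$ of $A_{i_0}$. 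Comparing the defining case distinction of $+^{(i_0)}$ with the defining case distinction of the sum $\alpha'_{i_0}+\alpha''_{i_0}\in\pi(\Sigma A_{i_0},X_{i_0})$ formed with the comultiplication on $\Sigma A_{i_0}$ induced by $\nu_{i_0}$, one sees coordinate by coordinate that (on suitable representatives)
\[T_1(\underline{\alpha}')+^{(i_0)}T_1(\underline{\alpha}'')=T_1(\alpha_1,\dots,\alpha'_{i_0}+\alpha''_{i_0},\dots,\alpha_n).\]
An Eckmann--Hilton interchange argument, carried out after the adjunction $\pi(\Sigma A_{i_0},X_{i_0})\cong\pi(A_{i_0},\Omega X_{i_0})$, identifies this $\nu_{i_0}$-sum with the standard sum in $\pi(\Sigma A_{i_0},X_{i_0})$.

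The core step is then the appropriate form of \cite[Theorem~(2.13)]{porter}: precomposition with $\omega_n$ converts $+^{(i_0)}$ into ordinary addition, i.e.
\[\bigl(T_1(\underline{\alpha}')+^{(i_0)}T_1(\underline{\alpha}'')\bigr)\,\omega_n\simeq T_1(\underline{\alpha}')\,\omega_n+T_1(\underline{\alpha}'')\,\omega_n\quad\text{in }\pi\bigl(\Sigma^{n-1}\Lambda(\underline{A}),T_1(\underline{X})\bigr),\]
the right-hand addition being that induced by the co-$H$ comultiplication of $\Sigma^{n-1}\Lambda(\underline{A})$ coming from $\nu_{i_0}$ on the smash factor $A_{i_0}$; as $n\ge 2$ the source is a (multiple) suspension, so once more by Eckmann--Hilton this is the standard group operation. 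Combining this with the previous display and with Definition~\ref{def.ext.prod} yields \ref{prop.linear}. For \ref{prop.linear.minus} I would specialise \ref{prop.linear} to $\alpha'_{i_0}=-\alpha_{i_0}$, $\alpha''_{i_0}=\alpha_{i_0}$ and use $-\alpha_{i_0}+\alpha_{i_0}=0$ together with the vanishing $\{\alpha_1,\dots,0,\dots,\alpha_n\}=0$ established in the first part of the preceding proposition, obtaining $\{\alpha_1,\dots,-\alpha_{i_0},\dots,\alpha_n\}+\{\alpha_1,\dots,\alpha_n\}=0$.

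The main obstacle is the third step: making \cite[Theorem~(2.13)]{porter} literally applicable here. Porter's theorem is about $\omega_n$ and maps out of $T_1(\underline{\Sigma A})$ into an arbitrary space, so the work is (a) to invoke naturality of $\omega_n$ (diagram~\eqref{diag.omega}) in order to reduce to the product map $\alpha_1\times\dots\times\alpha_n$ restricted to $T_1$, thereby handling the target $T_1(\underline{X})$ and not merely a suspension target, and (b) to keep track of which comultiplication on $\Sigma^{n-1}\Lambda(\underline{A})$ the theorem produces and to verify --- via the interchange law --- that it induces the standard, basepoint-independent group operation, so that the conclusion really is an identity of exterior Whitehead products.
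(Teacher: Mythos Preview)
Your proposal is correct and follows the same approach the paper indicates: the paper does not give a detailed proof, merely stating that (1) follows ``by means of an appropriate version of \cite[Theorem~(2.13)]{porter}'' and that ``item~\ref{prop.linear.minus} above is easily deduced from item~\ref{prop.linear}''. Your write-up is a faithful unpacking of this sketch, including the identification $T_1(\underline{\alpha}')+^{(i_0)}T_1(\underline{\alpha}'')=T_1(\alpha_1,\dots,\alpha'_{i_0}+\alpha''_{i_0},\dots,\alpha_n)$ and the Eckmann--Hilton bookkeeping that reconciles the $\nu_{i_0}$-induced addition with the standard suspension addition.
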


Certainly, item \ref{prop.linear.minus} above is easily deduced from item \ref{prop.linear}.

We note that Proposition~\ref{prop.lin.exterior} has been shown in \cite{hardie1} for the spherical case using the star product $\star$ studied in \cite{blakers}.

Let $\theta_i: X_i\hookrightarrow T_2(\underline{X})$ and $\Psi_1^{(i)}: T_1^{(i)}(\underline{X})\hookrightarrow T_2(\underline{X})$ for $i=1,\dots,n$
be the canonical embeddings. Given $\alpha_i:\Sigma A_i\to X_i$, we can consider the compositions $\theta_i\alpha_i$ and $\Psi_1^{(i)} \{\alpha_1,\dots,\alpha_n\}^{(i)}$,
explicitly defined by \[ \Sigma A_i \xrightarrow{\alpha_i} X_i \lhook\joinrel\xrightarrow{\theta_i} T_2(\underline{X}) \] and \[  \Sigma^{n-2}\Lambda^{(i)} (\underline{A}) \xrightarrow{\omega_n^{(i)}} T_1^{(i)}(\underline{\Sigma A})  \xrightarrow{T_1^{(i)}(\underline{\alpha})} T_1^{(i)}(\underline{
X}) \lhook\joinrel\xrightarrow{\Psi_1^{(i)}} T_2(\underline{X}) \]
for $i=1,\dots,n$, respectively.

\begin{remark}\label{rem.permutation}In the sequel, given a permutation $\sigma\in S_n$ of the set $\{1,\dots,n\}$ we write \[ \hat\sigma :  A_1\wedge\dots\wedge A_n \xrightarrow{\approx} A_{\sigma(1)}\wedge\dots\wedge A_{\sigma(n)} \]  for the
associated homeomorphism.
\end{remark}

Following the results from \cite[Section~2]{hardie1} on the spherical exterior Whitehead product and the generalized \cite[Lemma~4.1]{marek-thiago}
boundary Nakaoka--Toda operation formula \cite[Lemma~(1.2)]{nakaoka-toda}, we may state:

\begin{proposition}\label{prop.ext}
\begin{enumerate}[label=\textup{(\arabic*)}]\thmenumhspace
\item\label{prop.ext.permut} If $\sigma\in S_n$ is a permutation then \[(\Sigma^{n-1}\hat{\sigma})^*\{\alpha_{\sigma(1)},\dots,\alpha_{\sigma(n)}\}=\bar{\sigma}\{\alpha_1,\dots,\alpha_n\}\]
where $\bar{\sigma}:T_1(\underline{X})\xrightarrow{\approx} T_1(\underline{\sigma X})$ is the homeomorphism induced by $\sigma$.
\item\label{prop.ext.jacobi} The exterior Whitehead product satisfies the Jacobi identity \[ \textstyle\sum\limits_{i=1}^n(\Sigma^{n-2} \hat\sigma_i)^\ast [\theta_i \alpha_i, \Psi_1^{(i)} \{\alpha_1,\dots,\alpha_n\}^{(i)}]=0 \]
in $ \pi(\Sigma^{n-2}\Lambda (\underline{A}), T_2(\underline{X}))$ where $\hat\sigma_i:A_i\wedge\Lambda^{(i)}(\underline{A})\xrightarrow{\approx} \Lambda(\underline{A})$ is induced by an appropriate $\sigma_i\in S_n$ for $i=1,\dots,n$.
\item  If $f_i: X_i\to Y_i$ are maps for $i=1,\dots,n$ then
\[\{f_1\alpha_1,\dots,f_n\alpha_n\}=T_1(\underline{f})\{\alpha_1,\dots,\alpha_n\},\]
where $T_1(\underline{f}):T_1(\underline{X})\to T_1(\underline{Y})$.
\item Let $\beta_i: B_i\to A_i$ and $\alpha_i:\Sigma A_i\to X_i$ be any maps for $i=1,\dots,n$. Then, the following Whitehead identity holds
\textup{(\textit{cf.\ \cite[(2.4)]{hardie1} and \cite[(3.59)]{whiteheadG}})}: \[\{\alpha_1\Sigma\beta_1,\dots,\alpha_n\Sigma\beta_n\}=\{\alpha_1,\dots,\alpha_n\}\Sigma^{n-1}(\beta_1\wedge\dots\wedge\beta_n).\]
\end{enumerate}
\end{proposition}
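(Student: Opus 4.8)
The plan is to obtain items (3) and (4) directly from the functoriality of $T_i(-)$ and the naturality of Porter's pushout \eqref{diag.omega}, to deduce the permutation rule (1) by an equivariance diagram chase, and to prove the Jacobi identity (2) by first passing to a universal instance and then transcribing Hardie's spherical argument, the only new ingredient being the generalized boundary Nakaoka--Toda formula of \cite{marek-thiago}. For (3) I would observe that $f_i\alpha_i=f_i\circ\alpha_i$, so that $(f_1\alpha_1)\times\dots\times(f_n\alpha_n)=(f_1\times\dots\times f_n)\circ(\alpha_1\times\dots\times\alpha_n)$; restricting to fat wedges gives $T_1(f_1\alpha_1,\dots,f_n\alpha_n)=T_1(\underline f)\,T_1(\underline\alpha)$, and composing with $\omega_n$ and using Definition~\ref{def.ext.prod} yields (3) at once. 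For (4) I would factor $\alpha_i\Sigma\beta_i=\alpha_i\circ\Sigma\beta_i$, so that $T_1(\alpha_1\Sigma\beta_1,\dots,\alpha_n\Sigma\beta_n)=T_1(\underline\alpha)\,T_1(\Sigma\beta_1,\dots,\Sigma\beta_n)$; the point is that \eqref{diag.omega} is built functorially out of the cones and the fat wedge inclusions, hence $\omega_n$ is natural in the tuple $\underline A$, and the resulting naturality square gives $T_1(\Sigma\beta_1,\dots,\Sigma\beta_n)\,\omega_n\simeq\omega_n\,\Sigma^{n-1}(\beta_1\wedge\dots\wedge\beta_n)$, so that composing with $T_1(\underline\alpha)$ on the left gives the Whitehead identity.

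For (1) I would use that \eqref{diag.omega} is moreover equivariant for the simultaneous $S_n$-action on the factors, so that $\bar\sigma\circ\omega_n$ coincides, after the homeomorphism $\Sigma^{n-1}\hat\sigma$, with the corresponding map $\omega_n$ for the permuted tuple (the suspension coordinates being left untouched, so that no sign intervenes). Combining this with the evident identity $T_1(\alpha_{\sigma(1)},\dots,\alpha_{\sigma(n)})\circ\bar\sigma=\bar\sigma\circ T_1(\underline\alpha)$ and Definition~\ref{def.ext.prod}, a short diagram chase gives $(\Sigma^{n-1}\hat\sigma)^{*}\{\alpha_{\sigma(1)},\dots,\alpha_{\sigma(n)}\}=\bar\sigma\{\alpha_1,\dots,\alpha_n\}$.

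For (2) I would first reduce to the universal instance $X_i=\Sigma A_i$, $\alpha_i=\mathrm{id}_{\Sigma A_i}$. Since $T_2(\underline\alpha)\,\theta_i=\theta_i\alpha_i$ and $T_2(\underline\alpha)\,\Psi_1^{(i)}=\Psi_1^{(i)}\,T_1^{(i)}(\underline\alpha)$, the naturality of the ordinary Whitehead product in its target gives $T_2(\underline\alpha)_{*}[\theta_i,\Psi_1^{(i)}\omega_n^{(i)}]=[\theta_i\alpha_i,\Psi_1^{(i)}\{\alpha_1,\dots,\alpha_n\}^{(i)}]$; as post-composition with $T_2(\underline\alpha)$ is additive and commutes with the pre-compositions $(\Sigma^{n-2}\hat\sigma_i)^{*}$, the general Jacobi sum is the $T_2(\underline\alpha)_{*}$-image of the universal one, so it suffices to treat $\alpha_i=\mathrm{id}$. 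For the universal case I would follow \cite[Section~2]{hardie1}: by \eqref{diag.omega} the pair $(T_0(\underline{\Sigma A}),T_1(\underline{\Sigma A}))$ is the cone on $\Sigma^{n-1}\Lambda(\underline A)$ along $\omega_n$ and each $(T_0^{(i)}(\underline{\Sigma A}),T_1^{(i)}(\underline{\Sigma A}))$ the cone on $\Sigma^{n-2}\Lambda^{(i)}(\underline A)$ along $\omega_n^{(i)}$, so that $T_1(\underline{\Sigma A})$ is assembled from $T_2(\underline{\Sigma A})$ with attaching maps $\Psi_1^{(i)}\omega_n^{(i)}$; feeding these data into the generalized boundary Nakaoka--Toda operation formula \cite[Lemma~4.1]{marek-thiago} (which rests on \cite[Lemma~(1.2)]{nakaoka-toda} and the separation-map calculus recalled in Section~\ref{subsec.separation}) identifies the iterated boundary of $\omega_n$ along the filtration $T_2(\underline{\Sigma A})\subseteq T_1(\underline{\Sigma A})\subseteq T_0(\underline{\Sigma A})$ with $\sum_{i=1}^{n}(\Sigma^{n-2}\hat\sigma_i)^{*}[\theta_i,\Psi_1^{(i)}\omega_n^{(i)}]$. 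This iterated boundary is null because $\omega_n$ is itself the attaching map of the top cell of $T_0(\underline{\Sigma A})$ over $T_1(\underline{\Sigma A})$: its composite with the inclusion $T_1(\underline{\Sigma A})\hookrightarrow T_0(\underline{\Sigma A})$ extends over a cone, so the separation element that measures the Jacobi sum vanishes.

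I expect the main obstacle to be precisely this last identification: matching the Nakaoka--Toda iterated boundary with the symmetrized sum of Whitehead products demands the exact cellular structure of $T_1(\underline{\Sigma A})$ over $T_2(\underline{\Sigma A})$ together with a careful accounting of which coordinates are sent to base points and of the reordering homeomorphisms $\hat\sigma_i$ (and the signs they carry), in order to present every summand as a map out of the common domain $\Sigma^{n-2}\Lambda(\underline A)$. The remaining three items, and the reduction to the universal case, are routine once the formulas of Sections~\ref{sec.james} and~\ref{sec.ext.int} are available.
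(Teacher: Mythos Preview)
Your proposal is essentially correct and aligned with what the paper does: the paper in fact gives no proof of this proposition at all, only the prefatory sentence ``Following the results from \cite[Section~2]{hardie1} on the spherical exterior Whitehead product and the generalized \cite[Lemma~4.1]{marek-thiago} boundary Nakaoka--Toda operation formula \cite[Lemma~(1.2)]{nakaoka-toda}, we may state,'' so your write-up is already more detailed than the authors'. Your treatment of (3) and (4) by functoriality and naturality of Porter's pushout, of (1) by $S_n$-equivariance of the same diagram, and of (2) by reduction to the universal instance followed by the generalized Nakaoka--Toda boundary formula is exactly the route the citations point to.
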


The Jacobi identity stated in Proposition~\ref{prop.ext}\ref{prop.ext.jacobi} has been also considered in \cite[Corollary~1.9]{kish} with a different approach.

Let $J_n(X)$ be the $n$\textsuperscript{th} stage of the James construction $J(X)$ of a topological space $X$.
Given $m_i\geq 1$ for $i=1,\dots,n$ with $n\geq 2$, write $\mathbf{m}=(m_1,\dots,m_n)$ and let $\underline{J_{\mathbf{m}}(X)}=(J_{m_1}(X),\dots,J_{m_n}(X))$. Set $m'=m_1+\dots+m_n$ and $m''=m'-\min\limits_{1\leq i\leq n}\{m_i\}$ and note that there is an inclusion
$J_{m''}(X)\hookrightarrow J_{m'}(X)$. Next, consider the canonical multiplication \(\mu_{\mathbf{m}}(X): T_0(\underline{J_{\mathbf{m}}(X)})\to J_{m'}(X)\)
which restricts to a map
\[\mu_{\mathbf{m}}(X)_{|T_1(\underline{J_{\mathbf{m}}(X)})}: T_1(\underline{J_{\mathbf{m}}(X)}) \to J_{m''}(X)\]
commuting the diagram
\[
\xymatrix@C=3cm{
T_1(\underline{J_{\mathbf{m}}(X)}) \ar[r]^-{\mu_{\mathbf{m}}(X)_{|T_1(\underline{J_{\mathbf{m}}(X)})}} \ar@{^{(}->}[d] & J_{m''}(X) \ar@{^{(}->}[d]\\
T_0(\underline{J_{\mathbf{m}}(X)}) \ar[r]^-{\mu_{\mathbf{m}}(X)} & J_{m'}(X)
\rlap{.}}
\]

\begin{definition}\label{def.inner.prod}
The \textit{interior Whitehead product} $\langle\alpha_1,\dots,\alpha_n\rangle$ of maps $\alpha_i:\Sigma A_i \to J_{m_i}(X)$ for $i=1,\dots,n$ is the composition \[ \Sigma^{n-1}\Lambda(\underline{A}) \xrightarrow{\{\alpha_1,\dots,\alpha_n\}} T_1(\underline{J_{\mathbf{m}}(X)}) \xrightarrow{\mu_{\mathbf{m}}(X)_{|T_1(\underline{J_{\mathbf{m}}(X)})}} J_{m''}(X), \] and thus $ \langle\alpha_1,\dots,\alpha_n\rangle\in \pi (\Sigma^{n-1}\Lambda(\underline{A}), J_{m''}(X))$.
\end{definition}

Now, consider the composite maps $\alpha'_i :\Sigma A_i\xrightarrow{\alpha_i} J_{m_i}(X)\hookrightarrow J_{m''}(X)$ for $i=1,\dots,n$.
Then, notice that $\langle\alpha_1,\dots,\alpha_n\rangle$ represents an element of the higher order Whitehead product $[\alpha'_1,\dots,\alpha'_n]$
considered in \cite{porter}. Thus, applying \cite[Theorem~(2.1)]{porter} for any map $f:J_{m''}(X)\to Y$ it follows that
\begin{equation}
f_*\langle\alpha_1,\dots,\alpha_n\rangle \in f_*[\alpha'_1,\dots,\alpha'_n] \subseteq [f\alpha'_1,\dots,f\alpha'_n].
\label{eq.int.nat}
\end{equation}

The following properties of the interior Whitehead product are easily obtained from Proposition~\ref{prop.ext}.

\begin{corollary}\label{cor.interior}
\begin{enumerate}[label=\textup{(\arabic*)}]\thmenumhspace
\item Let $\sigma\in S_n$ be a permutation of the set $\{1,\dots,n\}$. Then \[\langle\alpha_{\sigma(1)},\dots,\alpha_{\sigma(n)}\rangle=(\Sigma^{n-1}\hat\sigma)^*\langle\alpha_1,\dots,\alpha_n\rangle,\]
where $\hat\sigma:\Lambda(\underline{A})\xrightarrow{\approx}
\Lambda(\underline{\sigma A})$ is the associated homeomorphism.

\item\label{cor.interior.jacobi} Denote by $\delta_i:J_{m_i}(X)\hookrightarrow J_{m^*}(X)$ the
inclusions for $i=1,\dots,n$, where $m^*=m'-\min\limits_{j<k}\{m_j+m_k\}$. The interior Whitehead product satisfies the Jacobi identity \[
\textstyle\sum\limits_{i=1}^{n} (\Sigma^{n-2} \hat\sigma_i)^\ast [
\delta_i\alpha_i,  \langle\alpha_1,\dots,\alpha_n\rangle^{(i)}]=0 \] as an element of $\pi(\Sigma^{n-2}\Lambda (\underline{A}), J_{m^*}(X))$.

\item If $f: X\to Y$ is any map then \[\langle J_{m_1}(f)\alpha_1,\dots,J_{m_n}(f)\alpha_n\rangle = J_{m''}(f)\langle\alpha_1,\dots,\alpha_n\rangle.\]

\item The following Whitehead identity holds:
\[\langle\alpha_1\Sigma\beta_1,\dots,\alpha_n\Sigma\beta_n\rangle=\langle\alpha_1,\dots,\alpha_n\rangle\Sigma^{n-1}(\beta_1\wedge\dots\wedge\beta_n).\]
\end{enumerate}
\end{corollary}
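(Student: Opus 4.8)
The plan is to deduce each of the four items of Corollary~\ref{cor.interior} directly from the corresponding item of Proposition~\ref{prop.ext} by post-composing with the restricted multiplication map and exploiting its naturality. Throughout, write $\mu^{(1)}_{\mathbf m}=\mu_{\mathbf m}(X)_{|T_1(\underline{J_{\mathbf m}(X)})}$, so that by Definition~\ref{def.inner.prod} one has $\langle\alpha_1,\dots,\alpha_n\rangle=\mu^{(1)}_{\mathbf m}\circ\{\alpha_1,\dots,\alpha_n\}$. The guiding observation is that the various structure maps on the James filtration --- inclusions $J_k(X)\hookrightarrow J_\ell(X)$, the maps $J_k(f)$, and the multiplications $\mu_{k,\ell}(X)$ --- are mutually compatible, so that the restricted multiplication $\mu^{(1)}_{\mathbf m}$ intertwines the permutation homeomorphisms $\bar\sigma$ on the fat wedges and is natural in $X$.

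For item (1), I would apply $\mu^{(1)}_{\mathbf m}$ to the identity in Proposition~\ref{prop.ext}\ref{prop.ext.permut}. Since $(\Sigma^{n-1}\hat\sigma)^\ast$ acts on the source $\Sigma^{n-1}\Lambda(\underline A)$, it commutes with post-composition by $\mu^{(1)}_{\mathbf m}$; one then only needs that $\mu_{\mathbf m}(X)_{|T_1(\underline{J_{\mathbf m}(X)})}\circ\bar\sigma$ equals the corresponding restricted multiplication for the permuted tuple, which holds because juxtaposition of reduced words is symmetric up to the permutation of factors (and the target $J_{m''}(X)$ is symmetric in the indices). For item (3), naturality of $\mu_{\mathbf m}(X)$ and of the James filtration in the variable $X$ gives a commuting square relating $\mu^{(1)}_{\mathbf m}$ over $X$ and over $Y$ through $T_1(\underline{J_{\mathbf m}(f)})$ and $J_{m''}(f)$; post-composing the identity of Proposition~\ref{prop.ext}(3) (with $f_i=J_{m_i}(f)$) with $\mu^{(1)}_{\mathbf m}$ and sliding $J_{m''}(f)$ out yields the claim. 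For item (4), since the Whitehead identity of Proposition~\ref{prop.ext}(4) is a statement about precomposition with $\Sigma^{n-1}(\beta_1\wedge\dots\wedge\beta_n)$ on the source, post-composing with $\mu^{(1)}_{\mathbf m}$ is immediate and gives the result verbatim.

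Item (2), the Jacobi identity, is the one requiring genuine care, and I expect it to be the main obstacle. Starting from Proposition~\ref{prop.ext}\ref{prop.ext.jacobi} in $\pi(\Sigma^{n-2}\Lambda(\underline A),T_2(\underline{J_{\mathbf m}(X)}))$, I would post-compose with a suitable multiplication map $T_2(\underline{J_{\mathbf m}(X)})\to J_{m^\ast}(X)$ restricting $\mu$ to the subspace with at least two coordinates at the basepoint; the numerical bound $m^\ast=m'-\min_{j<k}\{m_j+m_k\}$ is exactly the word-length estimate for a product in which two of the factors are omitted. The work is to check that under this map the two ingredients $\theta_i\alpha_i$ and $\Psi_1^{(i)}\{\alpha_1,\dots,\alpha_n\}^{(i)}$ of the ordinary Whitehead bracket go, respectively, to $\delta_i\alpha_i$ and to the interior product $\langle\alpha_1,\dots,\alpha_n\rangle^{(i)}$ followed by the appropriate inclusion into $J_{m^\ast}(X)$, so that the bracket $[\,\cdot\,,\cdot\,]$ in $T_2$ maps to the bracket $[\delta_i\alpha_i,\langle\dots\rangle^{(i)}]$ in $J_{m^\ast}(X)$ --- this uses that Whitehead products are natural under the multiplication map and the compatibility of the embeddings $\theta_i,\Psi_1^{(i)},\delta_i$ with $\mu$. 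Once that identification is in place, applying $(\Sigma^{n-2}\hat\sigma_i)^\ast$ and summing transports the vanishing sum of Proposition~\ref{prop.ext}\ref{prop.ext.jacobi} to the desired vanishing sum in $\pi(\Sigma^{n-2}\Lambda(\underline A),J_{m^\ast}(X))$; as with \cite[Corollary~1.9]{kish}, the only delicate point is keeping the filtration bounds consistent across all $n$ summands.
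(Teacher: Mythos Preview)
Your approach coincides with the paper's: the paper gives no proof beyond the sentence ``easily obtained from Proposition~\ref{prop.ext}'', and you make this explicit by post-composing each identity of Proposition~\ref{prop.ext} with the restricted multiplication $\mu^{(1)}_{\mathbf m}$. Items (3) and (4) go through exactly as you describe, and your outline for item (2) correctly identifies the restriction $T_2(\underline{J_{\mathbf m}(X)})\to J_{m^\ast}(X)$ and the naturality of the Whitehead bracket as the needed ingredients.

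There is, however, a genuine gap in your treatment of item (1). You need $\mu^{(1)}_{\sigma\mathbf m}\circ\bar\sigma$ to agree with $\mu^{(1)}_{\mathbf m}$ on $T_1(\underline{J_{\mathbf m}(X)})$, and you justify this by asserting that ``juxtaposition of reduced words is symmetric up to the permutation of factors''. But the James multiplication is \emph{not} commutative, and for $n\ge 3$ the fat wedge $T_1$ already contains points with two or more non-basepoint coordinates. Concretely, with $n=3$ and $\sigma=(1\;2)$ one has $\mu^{(1)}_{\mathbf m}(x_1,x_2,\ast)=x_1x_2$ while $\mu^{(1)}_{\sigma\mathbf m}\bigl(\bar\sigma(x_1,x_2,\ast)\bigr)=x_2x_1$, and these differ by a commutator that is generally non-trivial in $J_2(X)$ (e.g.\ for $X=\mathbb{S}^2$). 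Thus the two restricted multiplications are not even homotopic on $T_1$, and bare post-composition of Proposition~\ref{prop.ext}\ref{prop.ext.permut} does not yield item (1) once $n\ge 3$. Since the paper supplies no further detail here either, the gap is shared; a correct argument must add an extra step---for instance, showing that the commutator discrepancy vanishes after precomposition with $\omega_n$, or appealing to Hardie's original spherical argument in \cite{hardie1}.
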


\begin{remark}If $n=2$, Corollary~\ref{cor.interior}\ref{cor.interior.jacobi} is the result stated in \cite[Theorem~(5.4)]{ando} and \cite[Theorem~1]{rutter}
(corrected in \cite{rutter1}).
\end{remark}

\section{Generalized Hopf construction}\label{sec.hopf}
Let $\underline{A}=(A_1,\dots,A_n)$ be a $n$-tuple of pointed spaces. From \cite[Satz~19]{puppe} there is a homotopy equivalence
$\Sigma T_0(\underline{A}) \xrightarrow{\simeq} \bigvee\limits_{N} \Sigma\bigwedge\limits_{i\in N} A_i$, where $N$ runs through all
non-empty subsets of the set $\{1,\dots,n\}$. By the other hand, Hardie has constructed in \cite{hardie3} a particular homotopy equivalence which
possesses some useful properties. The Hardie's construction uses a right lexicographic order between some subsets $N$. More precisely, for each $i=1,\dots,n$ let $c_i=\binom{n}{i}$ be the binomial coefficient. For $k=1,\dots,c_i$, denote by $N_{i,k}$ the $k$\textsuperscript{th} subset of cardinality $i$ in the ordered sequence \[ \{1,\dots, i\} <\dots <  \{n-(i-1),\dots,n-1,n\}.\]

Let $W(\underline{A})=\bigvee\limits_{N} \Sigma\bigwedge\limits_{i\in N}  A_i$. Following \cite[(2.2)]{hardie3} and making use of the co-$H$-structure on $\Sigma T_0  (\underline{A})$, we define \[\theta=\textstyle\sum\limits_{i=1}^{n}\sum\limits_{k=1}^{c_i} \theta_{i,k}:\Sigma T_0(\underline{A}) \to W(\underline{A}),\] where  \[\theta_{i,k}: \Sigma  T_0 (\underline{A}) \to \Sigma\textstyle\bigwedge\limits_{j\in N_{i,k}} A_j \hookrightarrow W(\underline{A}) \] is determined by suspending the collapsing map $T_0 (\underline{A}) \to \bigwedge\limits_{j\in N_{i,k}} A_j$  and composing with the inclusion map $\Sigma\bigwedge\limits_{j\in N_{i,k}} A_j \hookrightarrow W(\underline{A})$.

\begin{theorem}[{\cite[Theorem~2.3]{hardie3} (cf.\ \cite[Satz~19]{puppe})}] The map $\theta: \Sigma T_0 (\underline{A})\to W(\underline{A})$ is a homotopy equivalence.
\label{hardie.2.3}
\end{theorem}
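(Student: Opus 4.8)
The plan is to prove that $\theta$ is a homotopy equivalence by combining the Puppe splitting with the fact that $\theta$ induces an isomorphism on reduced integral homology; since both $\Sigma T_0(\underline A)$ and $W(\underline A)$ are suspensions (hence simply connected after one suspension, or at least nilpotent with the same fundamental group acting trivially), a homology isomorphism between them is a homotopy equivalence by the Whitehead theorem for $CW$-complexes. So the entire content reduces to a homology computation.

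First I would record that, by the Puppe equivalence cited just above the statement, $\widetilde H_*(\Sigma T_0(\underline A))\cong\bigoplus_{N}\widetilde H_*\!\bigl(\Sigma\bigwedge_{i\in N}A_i\bigr)$, the sum over all nonempty $N\subseteq\{1,\dots,n\}$, and that $\widetilde H_*(W(\underline A))$ is visibly the same direct sum (this is how $W(\underline A)$ is defined). Thus the two graded groups are abstractly isomorphic of finite type in each degree (assuming the $A_i$ have homology of finite type; otherwise one argues degreewise with arbitrary coefficients, which is harmless). It then suffices to show $\theta_*$ is surjective, or equivalently that the composite of $\theta_*$ with each projection $W(\underline A)\to\Sigma\bigwedge_{j\in N}A_j$ hits the corresponding summand isomorphically modulo contributions from strictly larger smash factors.

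The key step is to analyze $\theta_{i,k}$ on homology. By construction $\theta_{i,k}$ is the suspension of the collapse map $c_{N_{i,k}}\colon T_0(\underline A)\to\bigwedge_{j\in N_{i,k}}A_j$ followed by the wedge inclusion. On homology the collapse map $c_N$ is, via the Künneth/Puppe identification of $\widetilde H_*(T_0(\underline A))$, the projection onto the summand indexed by $N$ together with the summands indexed by supersets of $N$ — a collapse map kills all product classes not supported on all of $N$ but \emph{retains} those supported on a superset, because collapsing extra coordinates to the basepoint of a class already nontrivial in those coordinates gives zero, while a class trivial in the collapsed coordinates survives. Concretely, writing a homology generator of the $N'$-summand as $x_{N'}$, one has $(c_N)_*(x_{N'}) = x_{N'\setminus(\text{collapsed})}$ if $N'\supseteq N$ after the collapse identifies things appropriately, and $0$ otherwise — the upshot is that $\theta_*$, written as a matrix with respect to the Puppe decompositions on source and target, is \emph{block upper-triangular} with respect to the partial order by inclusion (refined by the chosen lexicographic order), with identity maps on the diagonal blocks. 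Hence $\theta_*$ is an isomorphism.

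The main obstacle — really the only delicate point — is getting the triangularity bookkeeping exactly right: one must check that the collapse map $T_0(\underline A)\to\bigwedge_{j\in N}A_j$ induces on the Puppe summand indexed by $N'$ the identity when $N'=N$ and something supported on strictly fewer generators (hence lying in already-handled blocks, or zero) when $N'\neq N$, and that the chosen right-lexicographic ordering of equal-cardinality subsets is compatible with this so that the matrix is genuinely triangular rather than merely block-triangular with possibly singular diagonal. Once the triangular structure with invertible diagonal is established, $\theta_*$ is an isomorphism on $\widetilde H_*$ in every degree, and an application of the homology Whitehead theorem (both spaces being $CW$, and simply connected as double suspensions, or reduced to the simply connected case after one further suspension and use of the fact that a stable equivalence between simply connected spaces that is already a map of spaces and a homology iso is an equivalence) finishes the proof. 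I would remark that this is exactly Hardie's argument in \cite[Theorem~2.3]{hardie3}, reproduced for completeness, and that the only extra care over the sphere case is that $A_i$ need not be spheres, which affects nothing since all arguments are natural in the $A_i$ and only use their reduced homology.
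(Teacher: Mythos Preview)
The paper itself does not prove this theorem; it is stated with attribution to Hardie \cite{hardie3} and Puppe \cite{puppe} and then used without further argument, so there is no in-paper proof to compare against.

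Your overall strategy---show that $\theta$ induces an isomorphism on reduced homology and then invoke the homology Whitehead theorem---is sound and is the standard route to such splittings. However, your analysis of the collapse maps is confused and needlessly complicated. The collapse $c_N\colon T_0(\underline A)\to\bigwedge_{j\in N}A_j$ is the composite of the projection $\prod_i A_i\to\prod_{j\in N}A_j$ with the smash quotient; on reduced homology (say over a field, via K\"unneth) it is precisely the projection onto the $N$-summand, annihilating the $N'$-summand for \emph{every} $N'\ne N$. In particular, a reduced class supported on a strict superset $N'\supsetneq N$ is \emph{not} retained: projecting away the extra coordinates already kills it, since a reduced class has no degree-zero component to survive the collapse. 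Thus $\theta_*=\sum_N(\iota_N)_*(c_N)_*$ is the identity matrix with respect to the K\"unneth decompositions---not merely block upper-triangular---and no bookkeeping with the lexicographic order is required. (That ordering matters only for the homotopy-class sum defining $\theta$ in the non-abelian group $[\Sigma T_0(\underline A),W(\underline A)]$, and becomes irrelevant once one passes to homology, where addition is commutative.) So your ``main obstacle'' paragraph misidentifies the difficulty and gives an incorrect description of $(c_N)_*$, though the final conclusion that $\theta_*$ is an isomorphism survives, since diagonal with identity blocks is a fortiori triangular with invertible diagonal.

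Your handling of simple connectivity is also loose: the spaces are single suspensions, not double, so are simply connected only when the $A_i$ are path-connected. The clean fix is to observe that suspensions are simple spaces (the $\pi_1$-action on higher homotopy, hence on homology, is trivial for any co-$H$-space), so the homology Whitehead theorem for simple $CW$-complexes applies directly without further suspension.
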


Recall that given any based map $F:A_1\times A_2\to Z$, the Hopf construction on $F$ leads to a map $H(F):\Sigma(A_1\wedge A_2)\to \Sigma Z$ which is given
by the composition \[\Sigma(A_1\wedge A_2)\xrightarrow{\delta}\Sigma(A_1\times A_2)\xrightarrow{\Sigma F} \Sigma Z,\] where $\delta:\Sigma(A_1\wedge A_2)\to \Sigma(A_1\times A_2)$
is determined by the canonical section of the cofibration
$\xymatrix{\Sigma(A_1\vee A_2)\ar@{^{(}->}[r] & \Sigma(A_1\times A_2)\ar[r] & \Sigma(A_1\wedge A_1).\ar@/_1pc/@{-->}[l]_{\delta}}$

Denoting by $\imath:\Sigma\Lambda(\underline{A})\hookrightarrow W(\underline{A})$ the obvious inclusion, the composite map
\[\delta :\Sigma\Lambda(\underline{A})\lhook\joinrel\xrightarrow{\imath} W(\underline{A}) \xrightarrow{\theta^{-1}} \Sigma T_0(\underline{A})\]
yields a section of the cofibration
\[\xymatrix{\Sigma T_1(\underline{A})\ar@{^{(}->}[r]&\Sigma T_0(\underline{A})\ar[r]&\Sigma\Lambda(\underline{A})\ar@/_1pc/@{-->}[l]_{\delta}}\]
which leads to a homotopy equivalence \[\Sigma T_1(\underline{A})\vee\Sigma\Lambda(\underline{A})\xrightarrow{\simeq}\Sigma T_0(\underline{A}).\]

For the $n$-tuple $\underline{\Sigma A}$, the map $\delta :\Sigma\Lambda(\underline{\Sigma A})\to \Sigma T_0(\underline{\Sigma A})$ might be also described as follows. Given any map $f : A \to X$, we have a cofibration $X \hookrightarrow C_f \to \Sigma A$, where $C_f$ is the mapping cone of $f$. Write $i_X : X\hookrightarrow C_f$ and $i_{CA}:CA\to C_f$ for the canonical maps,  $\tau_1:C_{\Sigma f}\xrightarrow{\approx} \Sigma C_f$ and $\tau_2:C\Sigma A\xrightarrow{\approx} \Sigma CA$ for the canonical homeomorphisms.
Thus, $\tau_1 i_{C\Sigma A}= (\Sigma i_{CA}) \tau_2:C\Sigma A\to \Sigma C_f$.

Notice that for the constant map $c: A \to\ast$ there is a canonical homeomorphism $\nu_c : C_c\xrightarrow{\approx}\Sigma A$.
Next, given a commutative square
\begin{equation}
\begin{aligned}
\xymatrix{A' \ar[r]^{f'}\ar[d]_{\alpha} & X' \ar[d]^{\beta}\\
A\ar[r]^{f} & X\rlap{,} }
\end{aligned}\label{diag.alpha.beta}
\end{equation}
the universal properties of the mapping cones $C_{f'}$ and $C_{f}$ lead to a map $\gamma(f',f):C_{f'}\to C_{f}$ with
$\gamma(f',f) i_{X'}=i_{X}\beta$ and $\gamma(f',f) i_{CA'}=i_{CA} C\alpha$.
Further, the diagram
\begin{equation}
\begin{aligned}
\xymatrix@C=1.5cm{
X' \ar@{^(->}[r]^{i_{X'}}\ar[d]_{\beta} & C_{f'} \ar[r]^{\pi_{X'}}\ar[d]_{\gamma(f',f)} & \Sigma A'\ar[d]^{\Sigma \alpha} \\
X\ar@{^(->}[r]^{i_{X}} & C_{f} \ar[r]^{\pi_{X}} & \Sigma A}
\end{aligned}\label{diag.gamma}
\end{equation}
commutes, where $\pi_{X'} : C_{f'}\to \Sigma A'$ and $\pi_X : C_f\to \Sigma A$ are the projection maps.

Now, because of a section $\Sigma T_0(\underline{\Sigma A})\to \Sigma T_1(\underline{\Sigma A})$ for the inclusion map
$\Sigma T_1(\underline{\Sigma A})\hookrightarrow \Sigma T_0(\underline{\Sigma A})$, the map
$\Sigma \omega_n:\Sigma^n\Lambda(\underline{A})\to \Sigma T_1(\underline{\Sigma A})$ is trivial (cf.~\cite[Corollary~(4)]{porter}).
Consequently, there is a commutative diagram (up to homotopy)
\begin{equation}\label{d}
\begin{gathered}
\xymatrix@C=1cm{\Lambda(\underline{\Sigma A}) \ar[d]_{\lambda^{-1}} \ar[r]^-{c}& \ast \ar[d]\\
\Sigma^n\Lambda(\underline{A}) \ar[r]^-{\Sigma\omega_n} & \Sigma T_1(\underline{\Sigma A}) \rlap{,} }
\end{gathered}
\end{equation}
where $\lambda : \Sigma^n\Lambda(\underline{A}) \xrightarrow{\simeq} \Lambda(\underline{\Sigma A})$ is given by~\eqref{diag.lambda}.
It follows from diagrams \eqref{diag.alpha.beta} and \eqref{diag.gamma}  that there is a map $\gamma(c,\Sigma\omega_n): C_c \to C_{\Sigma \omega_n}$ which yields \begin{equation}
\tau : \Sigma\Lambda(\underline{\Sigma A})\xrightarrow{\nu_c^{-1}} C_c \xrightarrow{\gamma(c,\Sigma\omega_n)} C_{\Sigma \omega_n} \xrightarrow{\tau_1} \Sigma C_{\omega_n} \xrightarrow{\Sigma\nu_{\omega_n}} \Sigma T_0(\underline{\Sigma A}).
\label{eq.tau}\end{equation}

Hence, we have got:
\begin{proposition}\label{p} The maps $\delta,\tau : \Sigma\Lambda(\underline{\Sigma A})\to \Sigma T_0(\underline{\Sigma A})$ coincide.
\end{proposition}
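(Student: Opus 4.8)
The plan is to exploit that $\theta$ is a homotopy equivalence (Theorem~\ref{hardie.2.3}): since $\delta=\theta^{-1}\imath$ is the unique homotopy class with $\theta\delta\simeq\imath$, it suffices to prove $\theta\tau\simeq\imath$, i.e.\ that $\theta$ carries $\tau$ to the inclusion of the top summand $\Sigma\Lambda(\underline{\Sigma A})$ of $W(\underline{\Sigma A})$. Now $\theta$ is constructed so that the composite of $\theta$ with the projection of $W(\underline{\Sigma A})$ onto its top summand is the suspended collapse $\Sigma q$, where $q\colon T_0(\underline{\Sigma A})\to T_0(\underline{\Sigma A})/T_1(\underline{\Sigma A})=\Lambda(\underline{\Sigma A})$ is the map occurring in~\eqref{diag.lambda}, and so that $\theta$ restricts to an equivalence from $\Sigma T_1(\underline{\Sigma A})$ onto the wedge of the remaining summands. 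Thus $\theta\tau\simeq\imath$ is equivalent to the assertion that $\tau$ is the inclusion of the $\Sigma\Lambda(\underline{\Sigma A})$-summand for the splitting $\Sigma T_0(\underline{\Sigma A})\simeq\Sigma T_1(\underline{\Sigma A})\vee\Sigma\Lambda(\underline{\Sigma A})$ induced by $\theta$.

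First I would check that $\tau$ is at least a section of $\Sigma q$, by pasting the homotopy-commutative ladder~\eqref{diag.gamma} attached to the square~\eqref{d} (regarded as an instance of~\eqref{diag.alpha.beta} with $f'=c$, $f=\Sigma\omega_n$, $\alpha=\lambda^{-1}$). As $X'=\ast$, the projection $\pi_{X'}\colon C_c\to\Sigma\Lambda(\underline{\Sigma A})$ is the canonical homeomorphism $\nu_c$, so~\eqref{diag.gamma} gives $\pi_X\circ\gamma(c,\Sigma\omega_n)\circ\nu_c^{-1}\simeq\Sigma\lambda^{-1}$ for the projection $\pi_X\colon C_{\Sigma\omega_n}\to\Sigma\Sigma^{n}\Lambda(\underline{A})$. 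The canonical homeomorphism $\tau_1$ — compatible with the cone inclusions, as recorded by $\tau_1 i_{C\Sigma A}=(\Sigma i_{CA})\tau_2$ — carries $\pi_X$ to the suspension of the projection $C_{\omega_n}\to\Sigma^{n}\Lambda(\underline{A})$, and this projection equals $\lambda^{-1}\circ q\circ\nu_{\omega_n}$ by the commutativity of~\eqref{diag.lambda}. Combining these identities yields
\[\Sigma q\circ\tau=\Sigma q\circ\Sigma\nu_{\omega_n}\circ\tau_1\circ\gamma(c,\Sigma\omega_n)\circ\nu_c^{-1}\simeq\Sigma\lambda\circ\pi_X\circ\gamma(c,\Sigma\omega_n)\circ\nu_c^{-1}\simeq\Sigma\lambda\circ\Sigma\lambda^{-1}\simeq\operatorname{id}_{\Sigma\Lambda(\underline{\Sigma A})}.\]

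The main obstacle is then to promote the statement that $\tau$ is a section of $\Sigma q$ to the statement that $\tau$ is the $\Sigma\Lambda(\underline{\Sigma A})$-summand inclusion for the $\theta$-splitting: being merely a section does not suffice, since two sections of $\Sigma q$ differ by a map $\Sigma\Lambda(\underline{\Sigma A})\to\Sigma T_1(\underline{\Sigma A})$ together with higher Hilton--Milnor terms, and one must see these all vanish for $\tau$. To that end I would unwind~\eqref{eq.tau} and match it, step by step, with the splitting produced by the pushout~\eqref{diag.omega}: the relations $\gamma(f',f)i_{X'}=i_X\beta$ and $\gamma(f',f)i_{CA'}=i_{CA}C\alpha$ show that $\gamma(c,\Sigma\omega_n)\circ\nu_c^{-1}$ lands, up to the null-homotopy of $\Sigma\omega_n$ supplied by~\eqref{d}, in the image of the cone inclusion $i_{C\Sigma^{n}\Lambda(\underline{A})}$ of $C_{\Sigma\omega_n}$; then carrying this through $\tau_1$ and $\Sigma\nu_{\omega_n}$, and using that $\nu_{\omega_n}$ identifies the cone part $i_{C\Sigma^{n-1}\Lambda(\underline{A})}(C\Sigma^{n-1}\Lambda(\underline{A}))$ of $C_{\omega_n}$ with $\Omega_n(C\Sigma^{n-1}\Lambda(\underline{A}))\subseteq T_0(\underline{\Sigma A})$ through~\eqref{diag.omega}, would exhibit $\tau$ as exactly the wedge-summand inclusion built into $\theta$. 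The delicate part is the bookkeeping of the up-to-homotopy choices: one has to verify that the null-homotopy of $\Sigma\omega_n$ implicit in the splitting underlying $\theta$ and the one made explicit in~\eqref{d} agree up to homotopy of null-homotopies, since it is precisely this datum that rigidifies the section and forces $\theta\tau\simeq\imath$, hence $\tau\simeq\delta$.
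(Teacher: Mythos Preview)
The paper itself offers no proof of this proposition: it is stated immediately after the construction of $\tau$ with the phrase ``Hence, we have got,'' so the equality is asserted as a direct consequence of the preceding discussion. Your analysis is therefore substantially more careful than what the paper provides.

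Your strategy is sound. Reducing to $\theta\tau\simeq\imath$ is correct since $\theta$ is a homotopy equivalence, and your verification in the second paragraph that $\tau$ is a section of the suspended collapse $\Sigma s$ is a clean diagram chase using~\eqref{diag.gamma},~\eqref{diag.lambda}, and the compatibility $\tau_1 i_{C\Sigma A}=(\Sigma i_{CA})\tau_2$. You have also put your finger on the genuine content of the claim: two sections of $\Sigma T_0(\underline{\Sigma A})\to\Sigma\Lambda(\underline{\Sigma A})$ need not be homotopic, and $\gamma(c,\Sigma\omega_n)$ is only well defined once a null-homotopy of $\Sigma\omega_n\circ\lambda^{-1}$ has been fixed (the square~\eqref{d} commutes only up to homotopy, whereas the recipe~\eqref{diag.alpha.beta}--\eqref{diag.gamma} assumes a strictly commuting square). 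Different null-homotopies produce different $\tau$'s differing by maps into $\Sigma T_1(\underline{\Sigma A})$, so the proposition can only hold for the particular null-homotopy furnished by the $\theta$-splitting itself. The paper's sentence ``because of a section \ldots\ the map $\Sigma\omega_n$ is trivial'' is presumably intended to signal exactly this choice, but the point is never made explicit.

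Your final paragraph correctly locates this as the crux and sketches how matching cone-inclusions through $\nu_{\omega_n}$ and $\Omega_n$ should resolve it, but you stop short of carrying out the verification that the two null-homotopies agree. In that sense your argument is incomplete by your own admission --- though it is no more incomplete than the paper's, and it has the virtue of naming precisely what must be checked.
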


\medskip

The \textit{generalized Hopf construction} on a based map $F:T_0(\underline{A})\to Z$ is the composition 
\[H(F):\Sigma\Lambda(\underline{A})\lhook\joinrel\xrightarrow{\imath} W(\underline{A}) \xrightarrow{\theta^{-1}} \Sigma T_0(\underline{A}) \xrightarrow{\Sigma F} \Sigma Z.\]
In particular, for $Z=J(X)$ Hardie has defined in \cite{hardie3} the element 
\[c(F)=\psi_X H(F) \in \pi(\Sigma\Lambda(\underline{A}),\Sigma X).\]
By the adjointness, we obtain \[\tilde c(F)\in \pi(\Lambda (\underline{A}),\Omega\Sigma X).\]
From~\eqref{eq.gen.hopf}, for each $m\geq 1$, we have \[h_m(c(F))\in\pi(\Sigma\Lambda(\underline{A}),\Sigma X^{\wedge m}) \]
and by~\eqref{for}, it holds
\begin{equation}\label{forr}
h_m(c(F))=\psi_{X^{\wedge m}}(\Sigma q\phi_X^{-1})\Sigma\tilde{c}(F).
\end{equation}

Given based maps $\alpha_i: A_i\to J(X)$ for $i=1,\dots,n$, let $F'=\mu(X) T_0(\underline{\alpha}):T_0(\underline{A})\to J(X)$.
From \cite[Corollary~3.4]{hardie3}, $c(F'):\Sigma\Lambda(\underline{A})\to \Sigma X$ is trivial.

\subsection{Separation map}\label{subsec.separation}
Given a map $f : A\to X$, suppose that $u, v :C_f \to Y$ are maps such that $u i_X= v i_X$. From \cite[Section~3]{tsuchida}, there is a map $w: \Sigma A \to Y$ defined by
\[w(a,t)=\begin{cases}
u(a,2t), & \text{if $0\leq t\leq \frac{1}{2}$,} \\[1ex]
v(a,2-2t),& \text{if $\frac{1}{2}\leq t\leq 1 $}
\end{cases}\]
for $(a,t)\in \Sigma A$, called the \textit{separation map of $u,v$} and denoted by $d(u,v)$.

\begin{remark}\label{rem.sep.map.proj}
If $p_1,p_2:C A \to \Sigma A$ are given by $p_1(a,t)=(a,\frac{t}{2})$ and $p_2(a,t)=(a, \frac{2-t}{2})$ for $a\in  A$ and $0\leq t\leq 1$, the separation map $w=d(u,v)$ satisfies $w p_1=u i_{C A}$ and $ w p_2=v i_{C A}$.
Further, the map $w:\Sigma A\to Y$ is uniquely determined by these two properties.
\end{remark}

\begin{lemma}\label{lemma.nu}
If $u,v : \Sigma A\to Y$ then $d(u\nu_c,v\nu_c)=u-v$.
\end{lemma}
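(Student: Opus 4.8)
The plan is to unwind both sides of the claimed identity $d(u\nu_c,v\nu_c)=u-v$ directly from the defining formula of the separation map, using the fact that $\nu_c:C_c\xrightarrow{\approx}\Sigma A$ identifies the mapping cone of the constant map $c:A\to\ast$ with the reduced suspension $\Sigma A$. First I would recall that $C_c$ is, by construction, $CA/A\cong\Sigma A$ with the obvious cone-coordinate description, and that under $\nu_c$ the canonical map $i_{CA}:CA\to C_c$ becomes (up to the homeomorphism) the identification $CA\to \Sigma A$ collapsing $A\subseteq CA$; meanwhile $i_{\ast}:\ast\hookrightarrow C_c$ is the basepoint. Thus the hypothesis $u i_X=v i_X$ needed to form $d$ is automatically satisfied for $u\nu_c$ and $v\nu_c$, since $X=\ast$ here.

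The key computation is then to apply the characterization of the separation map in Remark~\ref{rem.sep.map.proj}: $w=d(u\nu_c,v\nu_c)$ is the unique map $\Sigma A\to Y$ with $w p_1=(u\nu_c) i_{CA}$ and $w p_2=(v\nu_c) i_{CA}$, where $p_1(a,t)=(a,\tfrac t2)$ and $p_2(a,t)=(a,\tfrac{2-t}{2})$. Since $\nu_c i_{CA}:CA\to\Sigma A$ is (essentially) the quotient map sending $(a,t)\mapsto(a,t)$, we get $(u\nu_c)i_{CA}(a,t)=u(a,t)$ and similarly for $v$. So I need a map $w:\Sigma A\to Y$ with $w(a,\tfrac t2)=u(a,t)$ and $w(a,\tfrac{2-t}{2})=v(a,t)$; reparametrizing, $w(a,s)=u(a,2s)$ for $s\in[0,\tfrac12]$ and $w(a,s)=v(a,2-2s)$ for $s\in[\tfrac12,1]$ — which is exactly the piecewise formula defining $d(u\nu_c,v\nu_c)$, consistent with Tsuchida's definition. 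Finally I would observe that this $w$ is precisely the standard representative of $u-v$: on the first half of the suspension coordinate it runs through $u$ at double speed, and on the second half it runs through $v$ \emph{in reverse}, $v(a,2-2t)$, which is the reverse-path (i.e.\ the additive inverse $-v$ in $\pi(\Sigma A,Y)$) concatenated with $u$. Hence $w\simeq u+(-v)=u-v$ as homotopy classes in $\pi(\Sigma A,Y)$, using that the suspension comultiplication on $\Sigma A$ realizes concatenation of the suspension coordinate.

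I do not anticipate a serious obstacle here; the only points requiring minor care are: (i) matching Tsuchida's basepoint and orientation conventions so that the "reverse" half genuinely gives $-v$ rather than $+v$ (a sign check), and (ii) being explicit that the well-definedness at $t=\tfrac12$ uses $u(a,1)=v(a,1)=\ast$, which holds because both $u\nu_c$ and $v\nu_c$ send the cone point to the basepoint. Once these conventions are pinned down the identity $d(u\nu_c,v\nu_c)=u-v$ is immediate from comparing the explicit formula for $w$ with the standard subtraction map on $\Sigma A$.
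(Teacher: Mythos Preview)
Your proposal is correct and follows essentially the same route as the paper: both arguments evaluate $d(u\nu_c,v\nu_c)$ directly via the $p_1,p_2$ characterization from Remark~\ref{rem.sep.map.proj}, obtain the piecewise formula $w(a,t)=u(a,2t)$ for $t\le\tfrac12$ and $w(a,t)=v(a,2-2t)$ for $t\ge\tfrac12$, and recognize this as the standard representative of $u-v$. The paper treats this last identification as an on-the-nose equality of maps (with $u-v$ defined by exactly that formula), whereas you phrase it as a homotopy class identity with a sign/orientation check; this is a harmless difference in presentation.
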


\begin{proof} If $u,v:\Sigma A\to Y$ then $u\nu_c, v\nu_c: C_c\to Y$ satisfy $u\nu_c i_X=v\nu_c i_X$ and so $w=d(u\nu_c,v\nu_c)$ is defined. Thus, for any $(a,t)\in \Sigma A$ it holds \begin{align*}
w(a,t) &= \begin{cases}
wp_1(a,2t)=u\nu_c i_{CA}(a,2t)=u(a,2t), & \text{if $0\leq t\leq \frac{1}{2}$,} \\[1ex]
wp_2(a,2-2t)=v\nu_c i_{CA}(a,2-2t)=v(a,2-2t), & \text{if $\frac{1}{2}\leq t\leq 1$,}
\end{cases} \\[1ex]
&= (u-v)(a,t).
\end{align*}
Hence, $d(u\nu_c,v\nu_c)=u-v:\Sigma A\to Y$ and the result follows.
\end{proof}

If $f': A'\to X'$, $f : A\to X$ and $u,v: C_{f}\to Y$ satisfy $u i_{X}=v i_{X}$ such that $w=d(u,v):\Sigma A\to Y$ is defined, then it
is clear that $ u\gamma(f',f) i_{X'}=v\gamma(f',f) i_{X'}$ and $w'=d(u\gamma(f',f),v\gamma(f',f)):\Sigma A'\to Y$
is defined as well. To simplify notation, we write simply  $\gamma$  for $\gamma(f',f)$ in the rest of the paper.

We claim that:
\begin{lemma}\label{lemma.d.susp}
$d(u\gamma,v\gamma)=d(u,v)\Sigma\alpha$.
\end{lemma}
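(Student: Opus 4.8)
The plan is to verify the identity by using the uniqueness clause in Remark~\ref{rem.sep.map.proj}: a map $\Sigma A' \to Y$ is the separation map $d(u\gamma,v\gamma)$ if and only if it satisfies the two compatibility conditions $w' p_1' = u\gamma\, i_{CA'}$ and $w' p_2' = v\gamma\, i_{CA'}$, where $p_1',p_2':CA' \to \Sigma A'$ are the standard maps for $A'$. So it suffices to check that the candidate $d(u,v)\Sigma\alpha$ satisfies both of these. First I would record the relation between the cone structures: by construction $\gamma = \gamma(f',f)$ satisfies $\gamma\, i_{CA'} = i_{CA}\, C\alpha$, and moreover the maps $p_i$, $p_i'$ and $C\alpha$, $\Sigma\alpha$ fit into the obvious naturality squares $p_i \circ C\alpha = \Sigma\alpha \circ p_i'$ for $i=1,2$, since $C\alpha$ and $\Sigma\alpha$ are both "apply $\alpha$ in the $A$-coordinate" and $p_i$ only rescales the cone parameter.

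Granting these, the verification is a short diagram chase. For $i=1$ we compute
\[
(d(u,v)\Sigma\alpha)\, p_1' = d(u,v)\,(\Sigma\alpha)\, p_1' = d(u,v)\, p_1\, (C\alpha) = u\, i_{CA}\, (C\alpha) = u\, \gamma\, i_{CA'},
\]
using the naturality square, then the defining property $d(u,v)p_1 = u\,i_{CA}$ from Remark~\ref{rem.sep.map.proj}, then $i_{CA}\,C\alpha = \gamma\, i_{CA'}$. The computation for $i=2$ is identical with $p_2', p_2$ and $v$ in place of $p_1', p_1$ and $u$. Since $d(u,v)\Sigma\alpha$ meets both characterizing properties, the uniqueness statement in Remark~\ref{rem.sep.map.proj} forces $d(u,v)\Sigma\alpha = d(u\gamma,v\gamma)$, which is the claim.

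The only real care needed is in the first paragraph, namely setting up $p_i \circ C\alpha = \Sigma\alpha \circ p_i'$ cleanly with the explicit formulas $p_1(a,t) = (a,t/2)$, $p_2(a,t) = (a,(2-t)/2)$ and $C\alpha(a',t) = (\alpha(a'),t)$, $\Sigma\alpha(a',t) = (\alpha(a'),t)$ — this is immediate from the formulas but should be stated so the chase is unambiguous. There is no substantive obstacle; the content is entirely in correctly bookkeeping the cone/suspension coordinates and invoking the uniqueness in Remark~\ref{rem.sep.map.proj}. As an alternative one could argue directly from the piecewise formula defining $w = d(u,v)$, but routing through the uniqueness property is cleaner and avoids case analysis on the parameter $t$.
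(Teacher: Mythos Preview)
Your proof is correct. The paper takes the alternative route you mention at the end: it writes out the explicit piecewise formulas for $w=d(u,v)$ and $w'=d(u\gamma,v\gamma)$, then computes $(w\,\Sigma\alpha)(a',t)$ case by case in $t$ and observes it agrees with $w'(a',t)$ because $\gamma\, i_{CA'}=i_{CA}\,C\alpha$. Your argument instead invokes the uniqueness clause in Remark~\ref{rem.sep.map.proj} and checks the two characterizing conditions via the naturality squares $p_i\circ C\alpha=\Sigma\alpha\circ p_i'$. Both approaches rest on the same key identity $\gamma\, i_{CA'}=i_{CA}\,C\alpha$; yours trades the explicit $t$-case analysis for a short diagram chase, which is arguably cleaner, while the paper's direct computation has the virtue of being entirely self-contained from the definition of the separation map.
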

\begin{proof}
The separation maps $d(u,v)$ and $d(u \gamma,v \gamma)$ are defined by means of
\[w(a,t)=\begin{cases}
u(a,2t), &\text{if $0\leq t\leq \frac{1}{2}$,}\\[1ex]
v(a,2-2t), & \text{if $\frac{1}{2}\leq t\leq 1$}
\end{cases}\]
for $(a,t)\in\Sigma A$ and
\[w'(a',t)=\begin{cases}
u \gamma(a',2t),   &\text{if $0\leq t\leq \frac{1}{2}$,}\\[1ex]
v \gamma(a',2-2t), &\text{if $\frac{1}{2}\leq t\leq 1$}
\end{cases}\]
for $(a',t)\in \Sigma A'$, respectively.
Then, 
\[(w \Sigma\alpha)(a',t)=\begin{cases}
u(\alpha(a'),2t),   &\text{if $0\leq t\leq \frac{1}{2}$,}\\[1ex]
v(\alpha(a'),2-2t), &\text{if $\frac{1}{2}\leq t\leq 1$}
\end{cases}\]
for $(a',t)\in \Sigma A'$. Because $\gamma  i_{CA'}=i_{CA}  C\alpha$, we derive that $w'=w \Sigma\alpha$ and the proof follows.
\end{proof}

Denote by $\tilde\sigma:\Sigma^2A\xrightarrow{\approx} \Sigma^2A$
the homeomorphism  defined by $(s,(s',a))\mapsto (s',(s,a))$ and notice that $f\tilde{\sigma}=-f$ for any map $f:\Sigma^2A\to X$. Since $u'=(\Sigma u)\tau_1$ and $v'=(\Sigma v)\tau_1: C_{\Sigma f} \to \Sigma Y$ satisfy $u' i_{\Sigma X}=v' i_{\Sigma X}$ the map
$w'=d(u',v'):\Sigma^2A\to \Sigma Y$ is defined and satisfies $w' p_1'=u' i_{C\Sigma A}$ and $w' p_2'=v' i_{C\Sigma A}$,
where $p_1',p_2':C\Sigma A \to \Sigma^2A$ are maps as in Remark~\ref{rem.sep.map.proj}.

Then, we can state (cf.\ \cite[Lemma~4.1]{hardie3}):

\begin{lemma}$d((\Sigma u)\tau_1,(\Sigma v) \tau_1)=(\Sigma d(u,v)) \tilde\sigma$.
\label{lemma.d.permut}\end{lemma}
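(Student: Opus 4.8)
The plan is to reduce the statement to the already-established Lemma~\ref{lemma.d.susp} by exhibiting $\tau_1$ as a map of the form $\gamma(f',f)$ between mapping cones, at the cost of a suspension. Recall that $\tau_1:C_{\Sigma f}\xrightarrow{\approx}\Sigma C_f$ is the canonical homeomorphism and that for any $u,v:C_f\to Y$ with $ui_X=vi_X$, the suspended maps $\Sigma u,\Sigma v:\Sigma C_f\to\Sigma Y$ satisfy $(\Sigma u)i_{\Sigma X}=(\Sigma v)i_{\Sigma X}$ after precomposition with $\tau_1$, since $\tau_1 i_{\Sigma X}=(\Sigma i_X)\,(\text{canonical homeo})$. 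So $u'=(\Sigma u)\tau_1$ and $v'=(\Sigma v)\tau_1$ are maps $C_{\Sigma f}\to\Sigma Y$ agreeing on $\Sigma X$, and $d(u',v'):\Sigma^2 A\to\Sigma Y$ is defined; this is the left-hand side.

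The key computation is to identify both sides using the defining property from Remark~\ref{rem.sep.map.proj}: a separation map $w:\Sigma B\to Z$ of $p,q:C_g\to Z$ is the \emph{unique} map satisfying $wp_1=p\,i_{CB}$ and $wp_2=q\,i_{CB}$, where $p_1,p_2:CB\to\Sigma B$ are the two standard half-cone inclusions. So I would show that $(\Sigma d(u,v))\tilde\sigma$ satisfies these two equations for the pair $(u',v')$ over $C_{\Sigma f}$, i.e.\ that
\[
(\Sigma d(u,v))\,\tilde\sigma\, p_1' = u'\, i_{C\Sigma A}
\quad\text{and}\quad
(\Sigma d(u,v))\,\tilde\sigma\, p_2' = v'\, i_{C\Sigma A}.
\]
For the first: $u'i_{C\Sigma A}=(\Sigma u)\tau_1 i_{C\Sigma A}=(\Sigma u)(\Sigma i_{CA})\tau_2=\Sigma(u\,i_{CA})=\Sigma(d(u,v)\,p_1)$, using the identity $\tau_1 i_{C\Sigma A}=(\Sigma i_{CA})\tau_2$ recorded in the excerpt and the defining property $d(u,v)p_1=u\,i_{CA}$. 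On the other hand, $(\Sigma d(u,v))\tilde\sigma\,p_1'=(\Sigma d(u,v))(\Sigma p_1)(\tau_2^{-1}\text{-type homeo})$ up to the standard identification $C\Sigma A\approx\Sigma CA$; the factor $\tilde\sigma$ is exactly the coordinate swap needed to match $\Sigma p_1':C\Sigma A\to\Sigma^2 A$ with $\Sigma(p_1):\Sigma CA\to\Sigma^2 A$ under $\tau_2$. An analogous chain handles $p_2'$ and $v'$. Then uniqueness forces $d(u',v')=(\Sigma d(u,v))\tilde\sigma$.

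The main obstacle is purely bookkeeping: carefully tracking the two distinct coordinate conventions — the suspension coordinate introduced by applying $\Sigma$, versus the cone coordinate already present in $C_f$ and reshuffled by $\tau_1$, $\tau_2$ — and verifying that their interchange is precisely the homeomorphism $\tilde\sigma$ (which the excerpt notes reverses orientation, $f\tilde\sigma=-f$). I expect no conceptual difficulty; the sign/orientation subtlety is absorbed into the statement via the explicit $\tilde\sigma$, so once the half-cone inclusions are matched up correctly the uniqueness clause in Remark~\ref{rem.sep.map.proj} closes the argument. It may be cleanest to write out the two half-cones explicitly in coordinates $(s,(s',a))\in\Sigma^2 A$ as in the definition of $\tilde\sigma$, confirm the four compositions agree pointwise, and invoke uniqueness, mirroring the style of the proof of Lemma~\ref{lemma.d.susp}.
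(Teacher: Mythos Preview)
Your approach is essentially the same as the paper's: verify the two half-cone conditions $(\Sigma d(u,v))\tilde\sigma\,p_i' = u'\,i_{C\Sigma A}$ (resp.\ $v'\,i_{C\Sigma A}$) via the identity $\tau_1 i_{C\Sigma A}=(\Sigma i_{CA})\tau_2$ together with the coordinate relation $(\Sigma p_1)\tau_2=\tilde\sigma p_1'$, and then invoke the uniqueness clause of Remark~\ref{rem.sep.map.proj}. Two minor points: your opening sentence about reducing to Lemma~\ref{lemma.d.susp} via a $\gamma(f',f)$ factorization is not what you actually carry out (nor would it work cleanly, since $\Sigma C_f$ is not naturally the codomain of a $\gamma$-map in the required form), and in the chain $(\Sigma u)(\Sigma i_{CA})\tau_2=\Sigma(u\,i_{CA})$ you have silently dropped the trailing $\tau_2$; keep it and the match with $(\Sigma d(u,v))\tilde\sigma p_1'$ is exactly the identity $(\Sigma p_1)\tau_2=\tilde\sigma p_1'$ the paper records.
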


\begin{proof}Just observe that $(\Sigma p_1) \tau_2=\tilde{\sigma}p_1'$ and then the equalities $w'  p_1'=(\Sigma u) \tau_1  i_{C\Sigma A}=\Sigma u  (\Sigma i_{CA}) \tau_2=(\Sigma w  p_1) \tau_2=(\Sigma w)  \tilde\sigma p_1'$ and similarly $w' p_2'=(\Sigma w)  \tilde\sigma p_2'$ imply the result.
\end{proof}

Next, if  $u,v:T_0(\underline{\Sigma A})\to Y$ coincide on $T_1(\underline{\Sigma A})$ then there exist the separation maps
$d(u\nu_{\omega_n},v\nu_{\omega_n}):\Sigma^n\Lambda(\underline{A})\to Y$ and $d(\Sigma (u\nu_{\omega_n})\tau_1,\Sigma (v\nu_{\omega_n})\tau_1):\Sigma^{n+1}\Lambda(\underline{A})\to \Sigma Y$.
Thus, the diagram~\eqref{d}, Lemmas~\ref{lemma.d.susp} and \ref{lemma.d.permut} lead to:
\begin{corollary}If $\gamma=\gamma(c,\Sigma\omega_n):C_c\to C_{\Sigma \omega_n}$ then
\[d(\Sigma (u\nu_{\omega_n})\tau_1\gamma,\Sigma (v\nu_{\omega_n})\tau_1\gamma)=(\Sigma d(u,v)) \tilde\sigma\Sigma\lambda^{-1}\]
in $\pi(\Sigma\Lambda(\underline{\Sigma A}),\Sigma Y)$.
\label{cor.gamma}\end{corollary}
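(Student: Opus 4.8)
The plan is to combine the three preceding lemmas in the order in which the composite $\gamma = \gamma(c,\Sigma\omega_n)$ and the suspensions appear. First I would invoke Lemma~\ref{lemma.d.permut} with $f = \omega_n$, $u$ replaced by $u\nu_{\omega_n}$ and $v$ by $v\nu_{\omega_n}$: since $u,v$ coincide on $T_1(\underline{\Sigma A})$, the composites $u\nu_{\omega_n}, v\nu_{\omega_n} : C_{\omega_n}\to Y$ agree on $\Sigma^{n-1}\Lambda(\underline{A})$ (the ``$X$-part'' of the relevant mapping cone), so the separation maps are defined and Lemma~\ref{lemma.d.permut} gives
\[
d\bigl(\Sigma(u\nu_{\omega_n})\tau_1,\,\Sigma(v\nu_{\omega_n})\tau_1\bigr)=\bigl(\Sigma d(u\nu_{\omega_n},v\nu_{\omega_n})\bigr)\tilde\sigma .
\]
Then I would apply Lemma~\ref{lemma.nu} (in the form used for $T_0/T_1$ rather than $X$, i.e.\ with the constant map replaced by $\omega_n$ on the nose it is really the identity-type reduction: $d(u\nu_{\omega_n},v\nu_{\omega_n})$ is the restriction of $u-v$ along the coalgebra structure to $\Sigma^{n-1}\Lambda(\underline{A})$, which is exactly $d(u,v)$ as these are maps out of $\Sigma T_0$), so that $d(u\nu_{\omega_n},v\nu_{\omega_n}) = d(u,v)$ after the identification by $\nu_{\omega_n}$; substituting this into the previous display yields
\[
d\bigl(\Sigma(u\nu_{\omega_n})\tau_1,\,\Sigma(v\nu_{\omega_n})\tau_1\bigr)=\bigl(\Sigma d(u,v)\bigr)\tilde\sigma
\]
as maps $\Sigma^{n+1}\Lambda(\underline{A})\to\Sigma Y$.

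Next I would precompose with $\gamma = \gamma(c,\Sigma\omega_n)$ and apply Lemma~\ref{lemma.d.susp}, with the square~\eqref{diag.alpha.beta} taken to be the square~\eqref{d} whose vertical maps are $\lambda^{-1}:\Lambda(\underline{\Sigma A})\to\Sigma^n\Lambda(\underline{A})$ (playing the role of $\alpha$) and the map induced on cones. Lemma~\ref{lemma.d.susp} gives $d(w'\gamma)=d(w')\,\Sigma\alpha$ with $\alpha$ the suspension $\Sigma\lambda^{-1}$ coming from~\eqref{d}. Combining,
\[
d\bigl(\Sigma(u\nu_{\omega_n})\tau_1\gamma,\,\Sigma(v\nu_{\omega_n})\tau_1\gamma\bigr)
=\bigl(\Sigma d(u,v)\bigr)\tilde\sigma\,\Sigma\lambda^{-1},
\]
which is precisely the claimed identity in $\pi(\Sigma\Lambda(\underline{\Sigma A}),\Sigma Y)$.

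The step I expect to be the main obstacle is bookkeeping the identifications — matching up the canonical homeomorphisms $\tau_1,\tau_2,\nu_c,\nu_{\omega_n}$ and the homotopy equivalence $\lambda$ so that the square~\eqref{d} really is an instance of~\eqref{diag.alpha.beta} with the correct $\alpha$, and verifying that the compatibility hypothesis (``$u,v$ coincide on $T_1(\underline{\Sigma A})$'') translates into the agreement-on-$i_X$ hypothesis needed for each separation map along the chain $C_c \to C_{\Sigma\omega_n}\to \Sigma C_{\omega_n}$. Once one checks that the vertical arrow $\alpha$ produced by~\eqref{d} is exactly $\Sigma\lambda^{-1}$ and that $\gamma i_{CA'} = i_{CA}C\alpha$ holds with these choices, Lemmas~\ref{lemma.nu}, \ref{lemma.d.susp} and \ref{lemma.d.permut} assemble formally and no further computation is needed. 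I would also remark that the sign subtlety $f\tilde\sigma=-f$ is already absorbed into Lemma~\ref{lemma.d.permut}, so it need not be revisited here.
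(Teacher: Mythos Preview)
Your approach is the paper's own: the corollary follows directly from diagram~\eqref{d} together with Lemmas~\ref{lemma.d.susp} and~\ref{lemma.d.permut}, precisely as in your steps~1 and~3.

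Your step~2, however, is spurious and confused. Lemma~\ref{lemma.nu} concerns the specific homeomorphism $\nu_c:C_c\to\Sigma A$ for the \emph{constant} map $c$ and computes $d(u\nu_c,v\nu_c)=u-v$; it does not produce an identity of the form ``$d(u\nu_{\omega_n},v\nu_{\omega_n})=d(u,v)$'' for $\omega_n$, nor is any such lemma needed. In the corollary, $d(u,v)$ for $u,v:T_0(\underline{\Sigma A})\to Y$ is simply shorthand for $d(u\nu_{\omega_n},v\nu_{\omega_n})$, since $T_0(\underline{\Sigma A})$ is identified with $C_{\omega_n}$ via $\nu_{\omega_n}$ (compare the proof of Lemma~\ref{LL}, where the explicit form $d(F'\nu_{\omega_n},F\nu_{\omega_n})$ appears). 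So step~2 should be deleted. One further slip: in step~3 the map $\alpha$ supplied by the square~\eqref{d} is $\lambda^{-1}$ itself, and $\Sigma\alpha=\Sigma\lambda^{-1}$; your phrase ``$\alpha$ the suspension $\Sigma\lambda^{-1}$'' is off by one suspension.
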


\medskip

To state the main result of this section, we recall from \cite{hardie3} the notion of a map of a strongly type.
\begin{definition}\label{def.strong.type}
Given $\alpha_i:  A_i\to J_{m_i}(X)$ for $i=1,\dots,n$, we say that $F:T_0(\underline{A}) \to J(X)$ is \textit{strongly of type}
$(\alpha_1,\dots,\alpha_n)^{k}$ if its image is contained in $J_{k}(X)$ and coincides on $T_1(\underline{A})$ with
$F'=\mu_{\mathbf{m}}(X) T_0(\underline{\alpha}): T_0(\underline{A})\to J_{m}(X)$, where $\underline{\alpha}=(\alpha_1,\dots,\alpha_n)$, $\mathbf{m}=(m_1,\dots,m_n)$ and $m=m_1+\dots+m_n$.
\end{definition}

\begin{lemma}[{cf. \cite[Theorem~1.8]{hardie3}}]Let $F:T_0(\underline{\Sigma A})\to J(X)$ be a map strongly of some type.
Then $\phi_X d(F'\nu_{\omega_n},F\nu_{\omega_n})\lambda^{-1}=\tilde c(F)$.
\label{LL}\end{lemma}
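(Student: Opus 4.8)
The plan is to identify both sides of the claimed equality by tracing through the two descriptions of the separation map $\delta=\tau$ supplied in Proposition~\ref{p}, and then invoke the formula for $c(F)=\psi_X H(F)$ together with the defining properties (1)--(5) of $\psi_X$, $\phi_X$ and $j_X$. First I would recall that, by definition, $H(F)=(\Sigma F)\theta^{-1}\imath=(\Sigma F)\delta$, and hence $c(F)=\psi_X(\Sigma F)\delta=\psi_X(\Sigma F)\tau$ using $\delta=\tau$. Passing to adjoints (or equivalently composing with $\phi_X$ on the left via property~(2), $\psi_X=\varepsilon_{\Sigma X}\Sigma\phi_X$, together with $\varepsilon_{\Sigma X}\Sigma\eta_X=\mathrm{id}$ from property~(1)) converts the statement into an equality of maps $\Lambda(\underline{\Sigma A})\to\Omega\Sigma X$, namely $\phi_X d(F'\nu_{\omega_n},F\nu_{\omega_n})\lambda^{-1}$ on the left against the adjoint of $\psi_X(\Sigma F)\tau$ on the right.

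The crux is then to unwind the explicit formula \eqref{eq.tau} for $\tau$: it is the composite $\Sigma\nu_{\omega_n}\circ\tau_1\circ\gamma(c,\Sigma\omega_n)\circ\nu_c^{-1}$. Precomposing $\Sigma F$ with $\Sigma\nu_{\omega_n}$ replaces $F$ by the induced map on the mapping cone $C_{\omega_n}$; then $\tau_1$ and $\gamma=\gamma(c,\Sigma\omega_n)$ bring in exactly the suspension-and-$\gamma$ operations handled by Lemma~\ref{lemma.d.susp} and Lemma~\ref{lemma.d.permut}, which were packaged into Corollary~\ref{cor.gamma}. The key observation is that $F$ being strongly of some type means $F$ and $F'$ agree on $T_1(\underline{\Sigma A})$, which is precisely the hypothesis ``$u,v$ coincide on $T_1(\underline{\Sigma A})$'' needed to apply Corollary~\ref{cor.gamma} with $u=F'$, $v=F$ (or the other order, tracking the sign coming from $\tilde\sigma$ and $f\tilde\sigma=-f$). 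So I would apply Corollary~\ref{cor.gamma} to rewrite the relevant separation map through $\gamma$ as $(\Sigma d(F'\nu_{\omega_n},F\nu_{\omega_n}))\tilde\sigma\,\Sigma\lambda^{-1}$, and then the extra suspension is absorbed by $\psi_X$ via properties~(2) and~(1), while $\tilde\sigma$ contributes only the sign needed to match the convention, leaving $\phi_X d(F'\nu_{\omega_n},F\nu_{\omega_n})\lambda^{-1}$.

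To make the comparison with the right-hand side $\tilde c(F)$ rigorous, I would use Lemma~\ref{lemma.nu}: it identifies a separation map $d(u\nu_c,v\nu_c)$ with a difference $u-v$, which is how the cone $C_c\cong\Sigma\Lambda(\underline{\Sigma A})$ enters through $\nu_c^{-1}$ in the definition of $\tau$. Concretely, the factor $\gamma(c,\Sigma\omega_n)\circ\nu_c^{-1}$ in $\tau$ is exactly what converts ``difference of $F'$ and $F$ on the cone of the constant map'' into ``separation map over $C_{\Sigma\omega_n}$'', and Lemma~\ref{lemma.nu} together with the naturality square \eqref{diag.gamma} governs this translation. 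Finally, the commuting square \eqref{diag.lambda} (the homotopy equivalence $\lambda$) and the relation $\psi_X=\varepsilon_{\Sigma X}\Sigma\phi_X$ reconcile the dimension shifts, so that after adjunction both sides become the same element of $\pi(\Lambda(\underline{A}),\Omega\Sigma X)$, namely $\tilde c(F)$.

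The main obstacle I anticipate is bookkeeping rather than conceptual: correctly matching the order of the pair $(u,v)$ in each separation map so that the sign from $\tilde\sigma$ (recall $f\tilde\sigma=-f$) and the sign from $d(u\nu_c,v\nu_c)=u-v$ in Lemma~\ref{lemma.nu} cancel to give $+\tilde c(F)$ and not $-\tilde c(F)$; a secondary subtlety is checking that the homeomorphisms $\tau_1,\tau_2,\nu_c,\nu_{\omega_n}$ are threaded through so that the hypotheses of Corollary~\ref{cor.gamma} (namely that the two maps into $Y=\Sigma X$ after suspension genuinely coincide on the appropriate subspace) are literally met, which is where ``$F$ strongly of some type'' is used in an essential way.
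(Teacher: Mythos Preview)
Your overall strategy matches the paper's: use Proposition~\ref{p} to replace $\delta$ by $\tau$, unwind $\tau$ via \eqref{eq.tau}, feed through Corollary~\ref{cor.gamma} and Lemma~\ref{lemma.d.permut}, and invoke Lemma~\ref{lemma.nu} to convert the separation map into a difference. The paper's chain of equalities is exactly
\[
\psi_{X}(\Sigma d(F'\nu_{\omega_n},F\nu_{\omega_n}))\tilde\sigma\,\Sigma\lambda^{-1}
= d(c(F')\nu_c,\,c(F)\nu_c)
= c(F')-c(F),
\]
which is what your outline produces.

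There is, however, one genuine missing step. After Lemma~\ref{lemma.nu} you do not end up with $\pm c(F)$ but with $c(F')-c(F)$; the sign flip from $\tilde\sigma$ turns this into $c(F)-c(F')$, not $c(F)$. You still need $c(F')=0$. The paper obtains this from \cite[Corollary~3.4]{hardie3}, which says precisely that for $F'=\mu(X)\,T_0(\underline{\alpha})$ the element $c(F')$ is trivial. Your ``main obstacle'' paragraph treats the issue as pure sign bookkeeping, but no choice of ordering of $(u,v)$ will make the $c(F')$ term disappear on its own. Thus ``$F$ strongly of some type'' is used in \emph{two} essential ways: first (as you note) to make $F$ and $F'$ agree on $T_1(\underline{\Sigma A})$ so that the separation map exists, and second to force $F'$ into the product form $\mu(X)\,T_0(\underline{\alpha})$ for which $c(F')$ vanishes. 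Once you insert this vanishing, your argument is complete and coincides with the paper's.
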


\begin{proof}
Let $F':T_0(\underline{\Sigma A})\to J(X)$ be a map as in Definition~\ref{def.strong.type} and let $\gamma=\gamma(c,\Sigma\omega_n):C_c\to C_{\Sigma\omega_n}$ as in Corollary~\ref{cor.gamma}. By~\eqref{eq.tau} and Proposition~\ref{p}, $(\Sigma\nu_{\omega_n})\tau_1\gamma\nu_c^{-1}=\tau=\delta$. Then,
\begin{multline*}
\psi_{X}(\Sigma d(F'\nu_{\omega_n},F\nu_{\omega_n}))\tilde{\sigma}\Sigma\lambda^{-1} \\
\begin{aligned}
&= \psi_{X} d(\Sigma (F'\nu_{\omega_n})\tau_1,\Sigma( F\nu_{\omega_n})\tau_1)\Sigma\lambda^{-1} \\
&= \psi_{X}d(\Sigma (F'\nu_{\omega_n}) \tau_1\gamma, \Sigma (F\nu_{\omega_n})\tau_1\gamma) \\
&= d(\psi_{X}(\Sigma F')(\Sigma\nu_{\omega_n})\tau_1\gamma\nu_c^{-1}\nu_c,\psi_X(\Sigma F)(\Sigma\nu_{\omega_n})\tau_1\gamma\nu_c^{-1}\nu_c)\\
&= d(c(F')\nu_c,c(F)\nu_c)\\
&= c(F')-c(F).
\end{aligned}
\end{multline*}
Since $(\Sigma d(F'\nu_{\omega_n},F\nu_{\omega_n}))\tilde{\sigma}=-\Sigma d(F'\nu_{\omega_n},F\nu_{\omega_n})$ and $c(F')=\ast$ (\cite[Corollary 3.4]{hardie3}), we get
$\psi_{X} (\Sigma d(F'\nu_{\omega_n},F\nu_{\omega_n}))\Sigma\lambda^{-1}=c(F)$
which certainly implies \[\phi_X d(F'\nu_{\omega_n},F\nu_{\omega_n})\lambda^{-1}=\tilde c(F).\]
This completes the proof.
\end{proof}

We finish this section with a generalization of \cite[Theorem~1.10]{hardie3} as the main result.

Let $q_m:(J_m(X),J_{m-1}(X))\to (X^{\wedge m},\ast)$ be the quotient map and denote by $q : J(X)\to J(X^{\wedge m})$ its combinatorial extension. Given $\alpha_i:\Sigma A_i\to J_{m_i}(X)$ for $i=1,\dots,n$, we obtain the maps \[ \Sigma A_i \xrightarrow{\alpha_i} J_{m_i}(X)\xrightarrow{q_{m_i}} X^{\wedge m_i} \] and the suspension of their smash products leads to \[\Sigma(q_{m_1}\alpha_1\wedge\dots\wedge q_{m_n}\alpha_n): \Sigma^{n+1}\Lambda(\underline{A}) \to \Sigma X^{\wedge m},\] where $m=m_1+\dots+m_n$.

\begin{theorem}\label{thm:main}If $\alpha_i\in \pi(\Sigma A_i,J_{m_i}(X))$ for $i=1,\dots,n$ and $F:T_0(\underline{\Sigma A})\to J(X)$ is strongly of type $(\alpha_1,\dots,\alpha_n)^{m-1}$ then \[h_m(c(F))=\Sigma(q_{m_1}\alpha_1\wedge\dots\wedge q_{m_n}\alpha_n). \]
\end{theorem}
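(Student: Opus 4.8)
The plan is to combine the Hopf--James invariant formula~\eqref{forr} with the identification of $\tilde c(F)$ provided by Lemma~\ref{LL}, and then to track how the combinatorial extension $q:J(X)\to J(X^{\wedge m})$ interacts with the separation map and the James filtration. Starting from~\eqref{forr}, we have $h_m(c(F))=\psi_{X^{\wedge m}}(\Sigma q\phi_X^{-1})\Sigma\tilde c(F)$. By Lemma~\ref{LL}, $\tilde c(F)=\phi_X d(F'\nu_{\omega_n},F\nu_{\omega_n})\lambda^{-1}$, so $\phi_X^{-1}\tilde c(F)=d(F'\nu_{\omega_n},F\nu_{\omega_n})\lambda^{-1}$ after cancelling $\phi_X^{-1}\phi_X$; hence $h_m(c(F))=\psi_{X^{\wedge m}}(\Sigma q)\bigl(\Sigma d(F'\nu_{\omega_n},F\nu_{\omega_n})\bigr)\Sigma\lambda^{-1}$. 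The naturality of the separation map with respect to post-composition (used already inside the proof of Lemma~\ref{LL}) gives $(\Sigma q)\Sigma d(F'\nu_{\omega_n},F\nu_{\omega_n})=\Sigma d(qF'\nu_{\omega_n},qF\nu_{\omega_n})$, reducing the problem to computing the separation map of the two composites $qF',qF:T_0(\underline{\Sigma A})\to J(X^{\wedge m})$ restricted via $\nu_{\omega_n}$.

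The next step is to exploit that $F$ is strongly of type $(\alpha_1,\dots,\alpha_n)^{m-1}$: its image lies in $J_{m-1}(X)$. Therefore $q_m F$ factors through the collapse $q_m:J_m(X)/J_{m-1}(X)\to X^{\wedge m}$ applied to something landing in $J_{m-1}(X)$, so on the level of $q:J(X)\to J(X^{\wedge m})$ the map $qF$ is null on all "length-$m$" information and in fact $qF$ is trivial -- or more precisely, $qF$ and $qF'$ together detect precisely the top cell. Concretely, $F'=\mu_{\mathbf m}(X)T_0(\underline{\alpha})$ has image in $J_m(X)$ (with $m=m_1+\dots+m_n$) and the composite $q_m F'$ on $T_0(\underline{\Sigma A})$ is, by the combinatorial description of $q$ and of juxtaposition, the smash product $q_{m_1}\alpha_1\wedge\dots\wedge q_{m_n}\alpha_n$ precomposed with the collapse $T_0(\underline{\Sigma A})\to\Lambda(\underline{\Sigma A})$, while $qF$ lands in $J_{m-1}(X^{\wedge m_1})\times\cdots$, i.e.\ is killed by the relevant projection. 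This is the analogue for general $n$ of the computation Hardie carries out for $n=2$ in the proof of~\cite[Theorem~1.10]{hardie3}, and the essential combinatorial input is the behaviour of $q$ on products established in~\cite{bs}.

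With that identification in hand, the separation map $d(qF'\nu_{\omega_n},qF\nu_{\omega_n})$ computes, via Lemma~\ref{lemma.nu} and the defining property of $\nu_{\omega_n}$ (Remark~\ref{rem.sep.map.proj}), the difference $qF'-qF$ restricted to the top cell $\Sigma^n\Lambda(\underline A)$; since $qF$ contributes nothing there, this difference is exactly $\lambda(q_{m_1}\alpha_1\wedge\dots\wedge q_{m_n}\alpha_n)$ after the identification $\Lambda(\underline{\Sigma A})\simeq\Sigma^n\Lambda(\underline A)$ of~\eqref{diag.lambda}. Reassembling, $\psi_{X^{\wedge m}}(\Sigma q_m)(\cdots)\Sigma\lambda^{-1}$ together with the relation $\psi_{X^{\wedge m}}\Sigma j_{X^{\wedge m}}=\mathrm{id}$ from the list after~\eqref{eq.gen.hopf} collapses the whole expression to $\Sigma(q_{m_1}\alpha_1\wedge\dots\wedge q_{m_n}\alpha_n)$, as claimed.

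\textbf{Main obstacle.} The delicate point is the second step: verifying that, on the subspace picked out by $\nu_{\omega_n}$, the composite $q\circ F$ genuinely carries no length-$m$ contribution while $q\circ F'$ realizes precisely the smash $q_{m_1}\alpha_1\wedge\dots\wedge q_{m_n}\alpha_n$. This requires a careful bookkeeping of the combinatorial extension $q$ on the juxtaposition map $\mu_{\mathbf m}(X)$ -- i.e.\ understanding which "monomials" of total length $m$ survive the collapse $J_{m-1}\to\ast$ on each smash factor -- and checking it is compatible with the right-lexicographic ordering built into $\theta$ (Theorem~\ref{hardie.2.3}), so that the single top cell is hit with the correct sign and without stray lower-filtration terms. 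Everything else is naturality of $d(-,-)$, the adjunction identities already listed, and the homotopy equivalences $\phi_X,\psi_X,\lambda$.
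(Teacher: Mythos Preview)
Your proposal is correct and follows essentially the same route as the paper's proof: reduce via~\eqref{forr} and Lemma~\ref{LL} to $\psi_{X^{\wedge m}}(\Sigma d(qF'\nu_{\omega_n},qF\nu_{\omega_n}))\Sigma\lambda^{-1}$, kill $qF$ using the hypothesis $F(T_0(\underline{\Sigma A}))\subseteq J_{m-1}(X)$, identify $q_mF'$ with $(q_{m_1}\alpha_1\wedge\dots\wedge q_{m_n}\alpha_n)\circ s$, and finish with $\psi_{X^{\wedge m}}\Sigma j_{X^{\wedge m}}=\mathrm{id}$. Two minor remarks: the paper handles the separation-map step through the cone structure $wp_1=qF'\Omega_n$, $wp_2=\ast$ and diagram~\eqref{diag.lambda} rather than via Lemma~\ref{lemma.nu} (which concerns $\nu_c$, not $\nu_{\omega_n}$); and your worry about compatibility with the lexicographic ordering in $\theta$ is unnecessary, since $\theta$ plays no role at this stage---the identification of $q_mF'$ follows from the elementary commutativity $q_m\mu_{\mathbf m}(X)=s\, T_0(\underline{q_{\mathbf m}})$ together with $F'=\mu_{\mathbf m}(X)T_0(\underline\alpha)$.
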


\begin{proof} In view of~\eqref{forr} and Lemma~\ref{LL}, we have
\begin{align*}
h_m(c(F)) &= \psi_{X^{\wedge m}} \Sigma (q\phi_X^{-1})\Sigma \tilde{c}(F) \\
&= \psi_{X^{\wedge m}} \Sigma (q\phi_X^{-1})\Sigma (\phi_X d(F'\nu_{\omega_n},F\nu_{\omega_n})\lambda^{-1}) \\
&= \psi_{X^{\wedge m}} (\Sigma d(qF'\nu_{\omega_n},qF\nu_{\omega_n}))\Sigma\lambda^{-1}.
\end{align*}
Since $F(T_0(\underline{\Sigma A}))\subseteq J_{m-1}(X)$ and $q$ is the combinatorial extension of $q_m$ then $qF\nu_{\omega_n}=\ast$. So $w=d(qF'\nu_{\omega_n},qF\nu_{\omega_n}):\Sigma^n\Lambda(\underline{A})\to J(X^{\wedge m})$ satisfies $wp_1=qF'\Omega_n$ and $wp_2=\ast$. Consequently, there exists $w'= w$ such that $w'p=qF'\Omega_n$, where $p: C\Sigma^{n-1}\Lambda(\underline{A}) \to \Sigma^n\Lambda(\underline{A})$ is the map from diagram~\eqref{diag.lambda}.

Next, consider the following commutative diagram
\[\begin{aligned}
\xymatrix@C=1.25cm@R=1cm{
C\Sigma^{n-1}\Lambda(\underline{A}) \ar[r]^-{\Omega_n} \ar[d]_{p} & T_0(\underline{\Sigma A}) \ar[r]^{F'} \ar[d]_{s} & J_m(X) \ar@{^(->}[r] \ar[d]^{q_m}   & J(X) \ar[d]^{q}\\
\Sigma^n\Lambda(\underline{A}) \ar[r]^{\lambda}
\ar `d/8pt[r] `[rrr]^{w'} [rrr]
& \Lambda(\underline{\Sigma A}) \ar[r]^{F''} & X^{\wedge m} \ar@{^(->}[r]^{j_{X^{\wedge m}}} & J(X^{\wedge m})\rlap{,}
}
\end{aligned}\]
where $F''$ is determined by $F''s=q_mF'$ and the other maps were already defined in the text.
Hence, $h_m(c(F))=\psi_{X^{\wedge m}} (\Sigma d(qF'\nu_{\omega_n},qF\nu_{\omega_n}))\Sigma\lambda^{-1}=\psi_{X^{\wedge m}} (\Sigma j_{X^{\wedge m}}F''\lambda)\Sigma\lambda^{-1}=
\psi_{X^{\wedge m}} (\Sigma j_{X^{\wedge m}}F'')=\Sigma F''$.

Since $F'=\mu_{\mathbf{m}}(X) T_0(\underline{\alpha})$, the commutativity of the diagram
\[\xymatrix@C=2cm{
& T_0(\underline{J_{\mathbf{m}}(X)}) \ar[d]^{T_0(\underline{q_{\mathbf{m}}})} \ar[r]^-{\mu_{\mathbf{m}}(X)} & J_m(X) \ar `d/0pt[dd] [ddl]_-{q_m} \\
T_0(\underline{\Sigma A}) \ar[r]^{T_0(\underline{q_{\mathbf{m}}})T_0(\underline{\alpha})} \ar[d]_{s} \ar `u/0pt[u]  [ru]^-{T_0(\underline{\alpha})} & T_0(\underline{X^{\wedge\mathbf{m}}}) \ar[d]_{s} & \\
\Lambda(\underline{\Sigma A}) \ar[r]^{F''} & X^{\wedge m} &
}\]
where $\underline{q_{\mathbf{m}}}=(q_{m_1},\dots,q_{m_n})$ finishes the proof.
\end{proof}

Let $\alpha_i : \Sigma A_i\to X$ be maps for $i=1,\dots,n$. If $\langle\alpha_1,\dots,\alpha_n\rangle=0$ then there is a map
$F : T_0(\underline{\Sigma A})\to J(X)$ strongly of type  $(\alpha_1,\dots,\alpha_n)^{n-1}$. Thus, the generalized Hopf
construction yields a map $c(F) : \Sigma\Lambda(\underline{\Sigma A})\to \Sigma X$ and Theorem~\ref{thm:main} supplies
a criterion to check if the map $c(F)$ is non-trivial.

\subsection{Miscellanea}\label{subsec.misc}
Fix $n\geq 2$ and suppose $A_i=A$ for $i=1,\dots,n$. Denote by $\mathrm{id}_{\Sigma A}: \Sigma A\to J_1(\Sigma A)$
the identity map and by $\rho_n(\Sigma A):T_0(\underline{\Sigma A})\to J_n(\Sigma A)$ the quotient map. For the sequences
$\mathbf{m}=(1,\overset{\times n}{\dotsc{}},1)$ and $\underline{\mathrm{id}_{\Sigma A}}=(\mathrm{id}_{\Sigma A},\overset{\times n}{\dotsc{}},\mathrm{id}_{\Sigma A})$
the map $\rho_n(\Sigma A)$ factorizes as \[\rho_n(\Sigma A): T_0(\underline{\Sigma A})\xrightarrow{T_0(\underline{\mathrm{id}_{\Sigma A}})} T_0(\underline{J_{\mathbf{m}}(\Sigma A)}) \xrightarrow{\mu_{\mathbf{m}}(\Sigma A)} J_n(\Sigma A) \] and restricts to
$\rho_n(\Sigma A)_{\mid T_1(\underline{\Sigma A})} : T_1(\underline{\Sigma A}) \to J_{n-1}(\Sigma A)$. This leads to a (up to homotopy) pushout
\[\xymatrix@C=2.5cm{T_1(\underline{\Sigma A})\ar[r]^-{\rho_n(\Sigma A)_{\mid T_1(\underline{\Sigma A})}} \ar@{^{(}->}[d] & J_{n-1}(\Sigma A) \ar@{^{(}->}[d] \\
T_0(\underline{\Sigma A})\ar[r]^{\rho_n(\Sigma A)} & J_n(\Sigma A) \rlap{.} } \]
Taking into account the pushout diagram~\eqref{diag.omega}, we get a (up to homotopy) pushout
\begin{equation}\label{dia}
\begin{gathered}
\xymatrix@C=2.5cm{\Sigma^{n-1}\Lambda(\underline{A})\ar[r]^-{\langle\mathrm{id}_{\Sigma A},\overset{\times n}{\dotsc{}},\mathrm{id}_{\Sigma A}\rangle}
\ar@{^{(}->}[d] & J_{n-1}(\Sigma A) \ar@{^{(}->}[d] \\
C\Sigma^{n-1}\Lambda(\underline{A})\ar[r]^{\rho_n(\Sigma A)\Omega_n} & J_n(\Sigma A)}
\end{gathered}
\end{equation}
which yields the following result (\cite[Proposition~1.1.1]{wu}):

\begin{proposition} Let $A$ be a pointed space. Then there is a \textup{(\textit{functorial})} cofibre sequence
\[\Sigma^{n-1}\Lambda (\underline{A})\xrightarrow{\rho_n(\Sigma A)_{\mid T_1(\underline{\Sigma A})}\omega_n} J_{n-1}(\Sigma A)\hookrightarrow J_n(\Sigma A).\]
Thus, the cofibre sequence
\[J_{n-1}(\Sigma A)\hookrightarrow J_n(\Sigma A)\to \Lambda(\underline{\Sigma A})\] is principal.
\end{proposition}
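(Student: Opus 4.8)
The plan is to identify the top cofibre sequence of the pushout square~\eqref{dia} and then recognize its attaching map as a known higher order Whitehead product, namely the interior one from Definition~\ref{def.inner.prod}. First I would observe that, by construction, the restricted quotient map $\rho_n(\Sigma A)_{\mid T_1(\underline{\Sigma A})}$ is precisely the composition $\mu_{\mathbf m}(\Sigma A)_{\mid T_1}\circ T_1(\underline{\mathrm{id}_{\Sigma A}})$ for $\mathbf m=(1,\dots,1)$; composing with $\omega_n$ and invoking Definitions~\ref{def.ext.prod} and~\ref{def.inner.prod} gives $\rho_n(\Sigma A)_{\mid T_1(\underline{\Sigma A})}\,\omega_n=\langle\mathrm{id}_{\Sigma A},\dots,\mathrm{id}_{\Sigma A}\rangle$ as a map $\Sigma^{n-1}\Lambda(\underline A)\to J_{n-1}(\Sigma A)$, which is exactly the horizontal arrow of~\eqref{dia}. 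This identification is what makes the top row of~\eqref{dia} the asserted cofibre sequence.

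Next I would extract the cofibre sequence itself: the left vertical map $\Sigma^{n-1}\Lambda(\underline A)\hookrightarrow C\Sigma^{n-1}\Lambda(\underline A)$ is the cone inclusion, so a (homotopy) pushout along it and along the attaching map $\rho_n(\Sigma A)_{\mid T_1}\,\omega_n$ is, by definition, the mapping cone of that attaching map. Hence $J_n(\Sigma A)\simeq J_{n-1}(\Sigma A)\cup_{\rho_n(\Sigma A)_{\mid T_1}\omega_n}C\Sigma^{n-1}\Lambda(\underline A)$, giving the cofibre sequence
\[\Sigma^{n-1}\Lambda(\underline A)\xrightarrow{\rho_n(\Sigma A)_{\mid T_1(\underline{\Sigma A})}\omega_n}J_{n-1}(\Sigma A)\hookrightarrow J_n(\Sigma A).\]
Collapsing $J_{n-1}(\Sigma A)$ in the mapping cone identifies the cofibre with $\Sigma\bigl(\Sigma^{n-1}\Lambda(\underline A)\bigr)$; but via the homotopy equivalence $\lambda:\Sigma^n\Lambda(\underline A)\xrightarrow{\simeq}\Lambda(\underline{\Sigma A})$ of~\eqref{diag.lambda} this is $\Lambda(\underline{\Sigma A})=(\Sigma A)^{\wedge n}$, recovering the standard James cofibration $J_{n-1}(\Sigma A)\hookrightarrow J_n(\Sigma A)\to (\Sigma A)^{\wedge n}$. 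Functoriality in $A$ is inherited from the functoriality of the James filtration, of $T_i(\underline{-})$, of $\omega_n$ (from the pushout~\eqref{diag.omega}, which is natural), and of the multiplication maps $\mu_{\mathbf m}$, so all the maps in the sequence are natural in $A$.

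Finally, for the word "principal" I would recall that a cofibre sequence $E\hookrightarrow E'\to B$ is principal when $B$ is a suspension and $E'\to B$ is co-induced in the appropriate sense; here the cofibre $\Lambda(\underline{\Sigma A})\cong\Sigma^n\Lambda(\underline A)$ is manifestly an $n$-fold suspension, and the attaching map $\rho_n(\Sigma A)_{\mid T_1}\omega_n$ desuspends (one suspension's worth) because $\Sigma\omega_n$ is null by~\cite[Corollary~(4)]{porter} — this is exactly the observation already used to build the diagram~\eqref{d}. That desuspension of the attaching map is precisely what lets one identify $J_n(\Sigma A)$ as the total space of a principal cofibration (the cofibre analogue of a principal fibration) over $\Lambda(\underline{\Sigma A})$ with "fibre" $J_{n-1}(\Sigma A)$; alternatively one cites~\cite[Proposition~1.1.1]{wu} directly. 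The main obstacle I anticipate is purely bookkeeping: making the identification $\rho_n(\Sigma A)_{\mid T_1}\omega_n=\langle\mathrm{id}_{\Sigma A},\dots,\mathrm{id}_{\Sigma A}\rangle$ rigorous requires carefully matching the factorization of $\rho_n(\Sigma A)$ through $T_0(\underline{J_{\mathbf m}(\Sigma A)})$ with the definition of the interior product, and then checking that the pushout~\eqref{dia} obtained by pasting~\eqref{diag.omega} onto the $\rho_n$-pushout genuinely has the claimed top row up to homotopy — everything else (cone inclusion $\Rightarrow$ mapping cone, suspension identification via $\lambda$, functoriality) is formal.
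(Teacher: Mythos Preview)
Your approach is essentially the paper's own: the proposition is stated there as an immediate consequence of the pushout~\eqref{dia}, obtained by pasting~\eqref{diag.omega} onto the $\rho_n$-pushout, and you have correctly unpacked those steps, including the identification $\rho_n(\Sigma A)_{\mid T_1}\omega_n=\langle\mathrm{id}_{\Sigma A},\dots,\mathrm{id}_{\Sigma A}\rangle$ that the paper records as the label of the top arrow in~\eqref{dia}.

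One correction to your final paragraph: the nullity of $\Sigma\omega_n$ is irrelevant here and your appeal to it is misplaced. Principality of $J_{n-1}(\Sigma A)\hookrightarrow J_n(\Sigma A)\to\Lambda(\underline{\Sigma A})$ means exactly that $J_n(\Sigma A)$ is, up to homotopy, the mapping cone of a map into $J_{n-1}(\Sigma A)$ from a space whose suspension is the cofibre $\Lambda(\underline{\Sigma A})$. You have already established this in your second paragraph: $J_n(\Sigma A)\simeq C_{\rho_n|_{T_1}\omega_n}$ with source $\Sigma^{n-1}\Lambda(\underline A)$, and $\lambda$ identifies $\Sigma\bigl(\Sigma^{n-1}\Lambda(\underline A)\bigr)$ with $\Lambda(\underline{\Sigma A})$. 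Nothing needs to ``desuspend'' further, and the vanishing of $\Sigma\omega_n$ (which the paper uses only to build the map $\tau$ in~\eqref{eq.tau}) plays no role in this argument. So the word ``Thus'' in the statement is literal: the second sentence is a restatement of the first via the equivalence $\lambda$.
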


Given simply-connected co-$H$-spaces $X_1,X_2$, Gray \cite{gray} has defined the Theriault product $X_1\circ X_2$
being a retraction of $\Sigma(\Omega X_1\wedge \Omega X_2)$. If
\[X_1\circ X_2 \xrightarrow{\zeta} \Sigma(\Omega X_1\wedge \Omega X_2) \xrightarrow{\kappa} X_1\circ X_2 \]
are maps with $\kappa\zeta=\mathrm{id}_{X_1\circ X_2}$ then the homotopy fibration
\[\Sigma(\Omega X_1\wedge \Omega X_2)\xrightarrow{w} X_1\vee X_2\hookrightarrow X_1\times X_2\]
determines a natural transformation
\[w_2 : X_1\circ X_2 \xrightarrow{\zeta} \Sigma(\Omega X_1\wedge \Omega X_2) \xrightarrow{w} X_1\vee X_2\]
generalizing the Whitehead product map.

We make use of the Theriault product to define the higher order Gray--Whitehead product for co-$H$-spaces $X_1,\dots,X_n$. As in \cite[Section~3]{gt}, we start recalling that Porter \cite[Theorem~1]{porter3} has shown that there is a homotopy fibration
\[\Sigma^{n-1}\Lambda(\underline{\Omega X})\to T_1(\underline{X})\hookrightarrow T_0(\underline{X}).\]
Because $X_i$ are co-$H$-spaces, there are coretractions $\nu_i : X_i\to \Sigma\Omega X_i$ for $i=1,\dots,n$. Define the higher Gray--Whitehead product map
\[w_n:(X_1\circ X_2) \wedge X_3\wedge \dots\wedge X_n\to T_1(\underline{X})\] as the composite  \begin{multline*}
(X_1\circ X_2)\wedge X_3\wedge \dots \wedge X_n \xrightarrow{\zeta\wedge\nu_3\wedge\dots\wedge\nu_n}\\
\Sigma(\Omega X_1\wedge \Omega X_2)\wedge \Sigma\Omega X_3 \wedge\dots\wedge \Sigma\Omega X_n =
\Sigma^{n-1}\Lambda(\underline{\Omega X})\to T_1(\underline{X}).
\end{multline*}

Notice that, by means of the above, basic results presented in previous sections might be generalized replacing suspended spaces by co-$H$-spaces.

Applying \cite[Theorem~1]{gray}, the inductive procedure shows that \[(X_1\circ X_2)\wedge X_3\wedge \dots \wedge X_n=\Sigma^{n-2}(X_1\circ \dots \circ X_n)\]
and consequently \[w_n : \Sigma^{n-2}(X_1 \circ \dots \circ X_n)\to  T_1(\underline{X}).\]

Now, we make use of the above for the spherical interior Whitehead product.
Let $\alpha_i :\mathbb{S}^{k_i}\to\mathbb{S}^m$ be maps with $m\ge 2$, $k_i\ge 2$ for $i=1,\dots,n$ and $n\ge 2$.
Then, $\langle\alpha_1,\dots,\alpha_n\rangle=0$ in $\pi_{k_1+\dots+k_n-1}(J_{n-1}(\mathbb{S}^m))$ yields a map
$F : \mathbb{S}^{k_1}\times\dots\times\mathbb{S}^{k_n}\to J(\mathbb{S}^m)$ and the generalized Hopf  construction
leads to possibly a non-trivial map $c(F) : \mathbb{S}^{k_1+\dots+k_n+1}\to \mathbb{S}^{m+1}$.

Next, write $\iota_n=\mathrm{id}_{\mathbb{S}^n}$, $\iota_{m,n} :  \mathbb{S}^n\hookrightarrow J_m(\mathbb{S}^n)$
for the canonical inclusion maps and $\eta_n \in\pi_{n+1}(\mathbb{S}^n)$ for generators with $n\geq 2$.

\begin{proposition}\label{pp}
\begin{enumerate}[label={\textup{(\arabic*)}}]\thmenumhspace

\item\label{pp.1} The element $\langle\iota_n,\overset{\times m}{\dotsc{}},\iota_n\rangle$ is of infinite order provided
$n$ is odd and $m\neq 2$ or $n$ is even;

\item\label{pp.2} $\pi_{mn-1}(J_{m-1}(\mathbb{S}^n))\approx\mathbb{Z}\oplus\pi_{mn}(\mathbb{S}^{n+1})$ and $\langle\iota_n,\overset{\times m}{\dotsc{}},\iota_n\rangle$ is a generator of the infinite cyclic group;

\item\label{pp.3} $[\iota_{m-2,n},\langle\iota_n,\overset{\times (m-1)}{\dots\dotsc{}},\iota_n\rangle]=0$ if and only if $n=2$ and $m$ is an odd prime; this element has order $m$ otherwise;

\item\label{pp.4} $\langle\eta_2\eta_3\eta_4,\iota_2,\iota_2,\iota_2\rangle=0$ in  $\pi_{10}(J_3(\mathbb{S}^2))$;

\item\label{pp.5} $\langle\eta_2\eta_3,\iota_2,\iota_2,\iota_2\rangle=0$ in $\pi_9(J_3(\mathbb{S}^2))$;

\item\label{pp.6} $\langle\eta_3,\eta_3,\iota_3\rangle=0$ in $\pi_{10}(J_2(\mathbb{S}^3))$.
\end{enumerate}
\end{proposition}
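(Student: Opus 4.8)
Here is the plan.

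\medskip

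The unifying idea is to recognize each interior Whitehead product below as (the boundary of a generator for) a cell of a James filtration, so that the homotopy exact sequence of a pair, together with the formal identities of Corollary~\ref{cor.interior} and Theorem~\ref{thm:main}, reduces everything to bookkeeping with homotopy groups of spheres. Throughout I use that $J_k(\mathbb{S}^n)\hookrightarrow J(\mathbb{S}^n)\simeq\Omega\mathbb{S}^{n+1}$ is $((k+1)n-1)$-connected and that $J_k(\mathbb{S}^n)/J_{k-1}(\mathbb{S}^n)=\mathbb{S}^{kn}$. For \ref{pp.1} and \ref{pp.2}: diagram~\eqref{dia} (with $\Sigma A=\mathbb{S}^n$ and $m$ factors) identifies $\langle\iota_n,\dots,\iota_n\rangle$, up to sign, with the attaching map of the top cell of $J_m(\mathbb{S}^n)=J_{m-1}(\mathbb{S}^n)\cup e^{mn}$, i.e.\ with $\partial(1)$ for the connecting homomorphism $\partial\colon\pi_{mn}(J_m(\mathbb{S}^n),J_{m-1}(\mathbb{S}^n))\cong\mathbb{Z}\to\pi_{mn-1}(J_{m-1}(\mathbb{S}^n))$. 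Connectivity gives $\pi_{mn}(J_m(\mathbb{S}^n))\cong\pi_{mn+1}(\mathbb{S}^{n+1})$ and $\pi_{mn-1}(J_m(\mathbb{S}^n))\cong\pi_{mn}(\mathbb{S}^{n+1})$ for $n\ge2$; under the stated restriction on $(m,n)$ the sphere $\mathbb{S}^{n+1}$ has only finite homotopy in degree $mn+1$ (Serre), so the map $\pi_{mn}(J_m(\mathbb{S}^n))\to\pi_{mn}(J_m,J_{m-1})=\mathbb{Z}$ in the exact sequence of the pair vanishes, $\partial$ is injective, and $\langle\iota_n,\dots,\iota_n\rangle=\partial(1)$ has infinite order — this is \ref{pp.1}. (Conceptually: were it of order $1$, the remark after Lemma~\ref{LL} would supply $F$ strongly of type $(\iota_n,\dots,\iota_n)^{m-1}$, and since here each $q_{m_i}$ is the identity, Theorem~\ref{thm:main} would force $h_m(c(F))=\Sigma\iota_{mn}$, a generator of $\pi_{mn+1}(\mathbb{S}^{mn+1})$, which no generalized Hopf invariant of $c(F)\in\pi_{mn+1}(\mathbb{S}^{n+1})$ can realize.) For \ref{pp.2} the rest of the sequence is $0\to\mathbb{Z}\xrightarrow{\partial}\pi_{mn-1}(J_{m-1}(\mathbb{S}^n))\to\pi_{mn}(\mathbb{S}^{n+1})\to0$, and I would split it — so that $\langle\iota_n,\dots,\iota_n\rangle$ generates the infinite cyclic summand — via the James fibration $J_{m-1}(\mathbb{S}^n)\to\Omega\mathbb{S}^{n+1}\xrightarrow{\mathcal{H}_m}\Omega\mathbb{S}^{mn+1}$ (equivalently the stable splitting of $\Sigma J_m(\mathbb{S}^n)$), which carries the infinite cyclic part of this group on the top cell.

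For \ref{pp.3}: I would apply the Jacobi identity of Corollary~\ref{cor.interior}\ref{cor.interior.jacobi} to $m$ equal copies of $\iota_n$. Since $\delta_i\iota_n=\iota_{m-2,n}$ and all $m$ summands then coincide up to the sign coming from $\hat\sigma_i$, this yields $m\,[\iota_{m-2,n},\langle\iota_n,\dots,\iota_n\rangle]=0$, so the order divides $m$. Sharpening this to exactly $m$, and isolating the vanishing at $n=2$ with $m$ an odd prime, is the delicate point: for $n=2$ I would compare $J_{m-2}(\mathbb{S}^2)$ with $\mathbb{C}P^{m-2}$ through the natural maps $J_k(\mathbb{S}^2)\to\mathbb{C}P^k$ and invoke the classical $p$-local divisibility of the top attaching maps of complex projective spaces, whereas for $n\ge3$ I would use Theorem~\ref{thm:main} and \eqref{eq.int.nat} to rule out that a proper divisor of $m$ annihilates the class.

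For \ref{pp.4}--\ref{pp.6}: peel off the $\eta$'s with the Whitehead identity of Corollary~\ref{cor.interior}(4). Writing $\eta_{k+1}=\Sigma\eta_k$, one obtains, up to sign, $\langle\eta_2\eta_3,\iota_2,\iota_2,\iota_2\rangle=\langle\eta_2,\iota_2,\iota_2,\iota_2\rangle\circ\eta_8$ and $\langle\eta_2\eta_3\eta_4,\iota_2,\iota_2,\iota_2\rangle=\langle\eta_2,\iota_2,\iota_2,\iota_2\rangle\circ\eta_8\eta_9$ in $\pi_\ast(J_3(\mathbb{S}^2))$, and $\langle\eta_3,\eta_3,\iota_3\rangle=\langle\iota_3,\iota_3,\iota_3\rangle\circ\Sigma^2(\eta_2\wedge\eta_2\wedge\mathrm{id})$ in $\pi_{10}(J_2(\mathbb{S}^3))$. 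Using $J_2(\mathbb{S}^3)\simeq\mathbb{S}^3\vee\mathbb{S}^6$ (since $[\iota_3,\iota_3]=0$) and $J_3(\mathbb{S}^2)=\mathbb{C}P^2\cup_{\langle\iota_2,\iota_2,\iota_2\rangle}e^6$, I would expand each composite by Hilton's theorem, feed in the identifications of $\langle\iota_2,\iota_2,\iota_2\rangle$ and $\langle\eta_2,\iota_2,\iota_2,\iota_2\rangle$ supplied by \ref{pp.1}--\ref{pp.2}, and verify vanishing against the tables of $\pi_\ast(\mathbb{S}^k)$.

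The main obstacle is everything past the formal exact-sequence and naturality input: the splitting asserted in \ref{pp.2}, the exact value $m$ together with the exceptional vanishing in \ref{pp.3}, and the three case checks in \ref{pp.4}--\ref{pp.6}. None of these is formal — they require the genuine (in practice $2$- and $p$-local) homotopy of the truncated James constructions $J_k(\mathbb{S}^n)$, i.e.\ of complex projective spaces and of $\mathbb{S}^3\vee\mathbb{S}^6$, and so come down to honest computations with Toda's tables rather than to the machinery developed so far.
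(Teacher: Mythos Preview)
Your plan is considerably more detailed than the paper's own treatment, which for items \ref{pp.1}--\ref{pp.4} consists entirely of citations to Baues \cite{baues1}, Hardie \cite{hardie1}, and Shar \cite{shar}, and for \ref{pp.5}--\ref{pp.6} gives only the hint ``use $\pi_9(\mathbb{S}^2)\approx\mathbb{Z}_3$ and $\pi_{10}(\mathbb{S}^3)\approx\mathbb{Z}_{15}$ and follow the proof of \ref{pp.4} mutatis mutandis''. Your exact-sequence argument for \ref{pp.1}--\ref{pp.2} is essentially what the cited literature does, and your Jacobi bound in \ref{pp.3} is a correct first step, the sharpening being precisely where the references carry the weight. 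For \ref{pp.4}--\ref{pp.6} your route via the Whitehead identity and a Hilton expansion is legitimate but not what the paper has in mind: the fact that the hinted groups $\mathbb{Z}_3$ and $\mathbb{Z}_{15}$ (and, for \ref{pp.4}, $\pi_{10}(\mathbb{S}^2)\approx\mathbb{Z}_{15}$) are all of \emph{odd} order strongly suggests that Hardie's argument is the parity trick --- by linearity (Proposition~\ref{prop.lin.exterior}) the products are $2$-torsion since $\eta_2\eta_3\eta_4$, $\eta_2\eta_3$, $\eta_3$ each have order $2$, and one then traps the class in an odd-order group. That is shorter than the Hilton bookkeeping you propose, though your method would also succeed.

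One genuine slip: you assert $J_2(\mathbb{S}^2)=\mathbb{C}P^2$ and build your description of $J_3(\mathbb{S}^2)$ on it. This is false. The attaching map of the $4$-cell in $J_2(\mathbb{S}^2)$ is $\langle\iota_2,\iota_2\rangle=[\iota_2,\iota_2]=\pm 2\eta_2$, not $\eta_2$; equivalently, $H^*(J_2(\mathbb{S}^2);\mathbb{Z})$ is a truncated divided-power algebra (the cup-square of the degree-$2$ generator is \emph{twice} a generator of $H^4$), not a truncated polynomial algebra. You are conflating $J_2$ with $SP_2$ --- as the paper itself records in Section~\ref{subsec.misc}, it is $SP_n(\mathbb{S}^2)=\mathbb{C}P^n$, and the comparison map $u_n(\mathbb{S}^2):J_n(\mathbb{S}^2)\to\mathbb{C}P^n$ is not an equivalence. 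This does not wreck your strategy for \ref{pp.4}--\ref{pp.5}, but any actual computation you carry out must be done against the cells of $J_2(\mathbb{S}^2)=\mathbb{S}^2\cup_{2\eta}e^4$ rather than $\mathbb{C}P^2$.
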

\begin{proof}
\ref{pp.1}: follows from \cite{baues1,hardie1,shar}.

\ref{pp.2}: follows from \cite{baues1,shar}.

\ref{pp.3}: follows from \cite{baues1,hardie1}.

\ref{pp.4}: follows from \cite{hardie1}.

\ref{pp.5}--\ref{pp.6}: we make use of the isomorphisms $\pi_9(\mathbb{S}^2)\approx\mathbb{Z}_3$ and $\pi_{10}(\mathbb{S}^3)\approx\mathbb{Z}_{15}$
and then follow \textit{mutatis mutandis} the proof of \ref{pp.4}.
\end{proof}

Proposition~\ref{pp}\ref{pp.3} implies the existence of a map $F$ strongly
of type $(\iota_{m-2,2},\langle\iota_2,\overset{\times (m-1)}{\dots\dotsc{}},\iota_2\rangle)^{m-2}$ for an odd prime $m$
which yields, in view of \cite[Theorem~1.5]{hardie1}, an element $c(F)$ of order $m$ in $\pi_{2m}(\mathbb{S}^3)$.
Further, Proposition~\ref{pp}\ref{pp.4}--\ref{pp.6} implies also the existence of maps $F$ strongly of types
$(\eta_2\eta_3\eta_4,\iota_2,\iota_2,\iota_2)^3$, $(\eta_2\eta_3,\iota_2,\iota_2,\iota_2)^3$
and $(\eta_3,\eta_3,\iota_3)^2$, respectively. Then, as in \cite[Corollary~1.4]{hardie1}
in view of Theorem~\ref{thm:main}, we obtain non-zero elements $c(F)$ of $\pi_{12}(\mathbb{S}^3)$,
$\pi_{11}(\mathbb{S}^3)$ and $\pi_{12}(\mathbb{S}^4)$, respectively.

\medskip

Write $SP_n(X)$ for the $n$\textsuperscript{th} stage of the symmetric power of a space $X$ for $n\ge 1$ and $SP(X)=\operatorname{\underrightarrow{\lim}}SP_n(X)$.
Because of the $H$-structure on $SP(X)$, the inclusion map $X\hookrightarrow SP(X)$ extends to a map
\[u(X) : J(X)\to SP(X)\] which leads to a sequence of maps $u_n(X): J_n(X)\to SP_n(X)$ for $n\ge 1$.
Taking $u_{n-1}(\Sigma A)\langle\mathrm{id}_{\Sigma A},\overset{\times n}{\dotsc{}},\mathrm{id}_{\Sigma A}\rangle$
we get a (up to homotopy) pushout
\begin{equation}
\begin{gathered}
\xymatrix@C=3.75cm{\Sigma^{n-1}\Lambda(\underline{A})\ar[r]^-{u_{n-1}(\Sigma A)\langle\mathrm{id}_{\Sigma A},\overset{\times n}{\dotsc{}},\mathrm{id}_{\Sigma A}\rangle}
\ar@{^{(}->}[d] & SP_{n-1}(\Sigma A) \ar@{^{(}->}[d] \\
C\Sigma^{n-1}\Lambda(\underline{A})\ar[r]^-{u_n(\Sigma A)\rho_n(\Sigma A)\Omega_n}& SP_n(\Sigma A) \rlap{.} }
\end{gathered}\label{sym}
\end{equation}
Notice that armed with the diagrams~\eqref{dia} and~\eqref{sym}, the sequence of maps $u_n(\Sigma A): J_n(\Sigma A)\to SP_n(\Sigma A)$ above
for $n\ge 1$ might be derived by the inductive procedure as well.

In particular, this yields a sequence of maps $u_n(\mathbb{S}^1) : J_n(\mathbb{S}^1)\to SP_n(\mathbb{S}^1)$ for $n\ge 1$. On the other hand, the Segre map \[\mathbb{R}P^m\times\mathbb{R}P^n\to\mathbb{R}P^{(m+1)(n+1)-1}\]
given by $([x_0\mathbin{:}\dotsb\mathbin{:}x_m],[y_0\mathbin{:}\dotsb\mathbin{:}y_n])\mapsto [x_0y_0\mathbin{:}\dotsb\mathbin{:}x_iy_j\mathbin{:}\dotsb\mathbin{:}x_my_n]$
leads to an $H$-structure on the infinite real projective space $\mathbb{R}P^\infty$.
Thus, the inclusion map $\mathbb{S}^1=\mathbb{R}P^1\hookrightarrow\mathbb{R}P^\infty$ extends to a map
\[v(\mathbb{S}^1) : J(\mathbb{S}^1)=\Omega\mathbb{S}^2\to \mathbb{R}P^\infty\] which leads to
a sequence of maps $v_n(\mathbb{S}^1) : J_n(\mathbb{S}^1)\to\mathbb{R}P^n$ for $n\ge 1$. Further, the abelian $H$-structure on $\mathbb{R}P^\infty$ yields the factorization
\[\xymatrix{\mathbb{S}^1 \ar@{^{(}->}[r] \ar@{^{(}->}[d] & \mathbb{R}P^\infty \\
SP(\mathbb{S}^1) \ar@{-->}[ur]} \]
which in turn implies the sequence of maps $\varphi_n(\mathbb{S}^1) : SP_n(\mathbb{S}^1)\to \mathbb{R}P^n$ with commutative diagrams
\[\xymatrix@C=1.5cm{J_n(\mathbb{S}^1)\ar[r]^-{v_n(\mathbb{S}^1)}\ar[d]_{u_n(\mathbb{S}^1)}&\mathbb{R}P^n \\
SP_n(\mathbb{S}^1)\ar[ur]_{\varphi_n(\mathbb{S}^1)}}\]
for $n\ge 1$. Because, by means of~\cite{morton}, the space $SP_n(\mathbb{S}^1)$ has the homotopy type of the circle $\mathbb{S}^1$,
the induced homomorphisms $\pi_k(v_n(\mathbb{S}^1))$ are trivial for $k>1$ and $n\ge 1$.

Let now $\mathbb{C}P^n$ be the complex projective $n$-space. By the projective Vi\`ete's Theorem (see e.g., \cite{arnold}), it holds $SP_n(\mathbb{S}^2)=\mathbb{C}P^n$
and we get a sequence of maps $u_n(\mathbb{S}^2) : J_n(\mathbb{S}^2)\to \mathbb{C}P^n$ for $n\ge 1$.
Notice that these maps are also determined by the $H$-structure (settled e.g., by the Segre map) on $\mathbb{C}P^\infty$ and the factorization
\[\xymatrix{\mathbb{S}^2 \ar@{^{(}->}[r] \ar@{^{(}->}[d] & \mathbb{C}P^\infty \\
J(\mathbb{S}^2)\rlap{.} \ar@{-->}[ur]} \]
Write $\gamma_n :\mathbb{S}^{2n+1}\to\mathbb{C}P^n$ for the quotient map and $j_n(\mathbb{C}) : \mathbb{S}^2=\mathbb{C}P^1\hookrightarrow\mathbb{C}P^n$
for the canonical inclusion. It is known from \cite[Corollary~4.4]{ark} and \cite[Corollary~2]{porter2} that the set
$[j_n(\mathbb{C}),\overset{\times (n+1)}{\dots\dotsc{}},j_n(\mathbb{C})]$ of $(n+1)$\textsuperscript{th} order Whitehead products contains a single element which is equal
to $(n+1)\mathpunct{!}\gamma_n$. Consequently, by means of~\eqref{eq.int.nat}, for $n\ge 1$ the map $u_n(\mathbb{S}^2) : J_n(\mathbb{S}^2)\to\mathbb{C}P^n$
satisfies \[u_n(\mathbb{S}^2)\langle\iota_2,\overset{\times (n+1)}{\dots\dotsc{}},\iota_2\rangle=[j_n(\mathbb{C}),\overset{\times (n+1)}{\dots\dotsc{}},j_n(\mathbb{C})]=(n+1)\mathpunct{!}\gamma_n.\]

Let now $\mathbb{H}$ be the quaternionic algebra and $j_n(\mathbb{H}) : \mathbb{S}^4=\mathbb{H}P^1\hookrightarrow\mathbb{H}P^n$ the canonical inclusion.
Then, by \cite[Remark~4.9(iii)]{marek-thiago}, the higher order Whitehead product $[j_n(\mathbb{H}),\overset{\times (n+1)}{\dots\dotsc{}},j_n(\mathbb{H})]=\emptyset$
for $n\ge 2$. Hence, an existence of a map $u_n(\mathbb{S}^4) : J_n(\mathbb{S}^4)\to\mathbb{H}P^n$ with properties as above
leads to a contradiction $u_n(\mathbb{S}^4)\langle\iota_4,\overset{\times (n+1)}{\dots\dotsc{}},\iota_4\rangle\in[j_n(\mathbb{H}),\overset{\times (n+1)}{\dots\dotsc{}},j_n(\mathbb{H})]=\emptyset$.

Let $\alpha_i :\mathbb{S}^{k_i}\to\mathbb{S}^1$ be maps with $k_i\ge 1$ for $i=1,\dots,n$ and $n\ge 2$. If $k_{i_0}> 1$ for some $1\le i_0\le n$ then $\alpha_{i_0}=0$ and certainly $\langle\alpha_1,\dots,\alpha_n\rangle=0$ in $\pi_{k_1+\dots+k_n-1}(J_{n-1}(\mathbb{S}^1))$.


Next, consider maps $\alpha_i :\mathbb{S}^{k_i}\to X$  with $k_i\ge 2$ for $i=1,\dots,n$ and $n\ge 2$, where $X=\mathbb{S}^2$ or $\mathbb{R}P^2$.
Then, $u_{n-1}(\mathbb{S}^2)\langle\alpha_1,\dots,\alpha_n\rangle=0$ provided $k_1+\dots+k_n<2n$. Further, by~\cite[Theorem~2]{arnold}, it holds $SP_n(\mathbb{R}P^2)=\mathbb{R}P^{2n}$. Hence,
$u_{n-1}(\mathbb{R}P^2)\langle\alpha_1,\dots,\alpha_n\rangle=0$ provided $k_1+\cdots+k_n<2n-1$.

We observe that \cite[Theorem~2]{arnold} also gives sequences of maps $A_n(\mathbb{S}^2)$ and $A_n(\mathbb{R}P^2)$ for $n\geq 1$, fitted together by the commutative diagrams
\[ \xymatrix@C=1.5cm{
J_n(\mathbb{S}^2) \ar[d]_{A_n(\mathbb{S}^2)} \ar[r] & J_n(\mathbb{R}P^2) \ar[d]^{A_n(\mathbb{R}P^2)} \\
\mathbb{C}P^n \ar[r] & \mathbb{R}P^{2n} \rlap{.}
} \]

In this sense, we close the paper with:
\begin{conjecture}Let $\alpha_i :\mathbb{S}^{k_i}\to X$ be maps with $k_i\ge 2$ for $i=1,\dots,n$ and $n\ge 2$, where $X=\mathbb{S}^2$ or $\mathbb{R}P^2$.
If $k_{i_0}>2$ for some $1\le i_0\le n$ then $\langle\alpha_1,\dots,\alpha_n\rangle=0$ in $ \pi_{k_1+\dots+k_n-1}(J_{n-1}(X))$.
\end{conjecture}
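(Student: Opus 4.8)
The plan is to reduce the conjecture to the case $X=\mathbb{S}^2$ and then to exploit the maps $u_{n-1}(\mathbb{S}^2):J_{n-1}(\mathbb{S}^2)\to\mathbb{C}P^{n-1}$ together with naturality of the interior Whitehead product. First I would observe that, since there is an index $i_0$ with $k_{i_0}>2$, the map $\alpha_{i_0}:\mathbb{S}^{k_{i_0}}\to X$ factors (up to homotopy) through the identity $\iota$ on $\mathbb{S}^2$ composed with a self-map of $\mathbb{S}^2$ of the relevant dimension, or more simply one notes that $\alpha_{i_0}$ lands in a sphere and can be desuspended; the useful structural fact is that each $\alpha_i:\mathbb{S}^{k_i}\to\mathbb{S}^2$ is of the form $\iota_2\circ\Sigma^{k_i-2}(\text{something})$ only when $k_i=2$, so for $k_{i_0}>2$ the map $\alpha_{i_0}$ is a genuinely higher-dimensional element of $\pi_{k_{i_0}}(\mathbb{S}^2)$. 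The Whitehead identity from Corollary~\ref{cor.interior}(4) then lets us write $\langle\alpha_1,\dots,\alpha_n\rangle=\langle\iota_2',\dots,\iota_2'\rangle\circ\Sigma^{n-1}(\beta_1\wedge\dots\wedge\beta_n)$ wherever the $\alpha_i=\iota_2\Sigma\beta_i$ with $\iota_2':\mathbb{S}^2\to J_1(\mathbb{S}^2)$; applying this at the index $i_0$ alone already factors $\langle\alpha_1,\dots,\alpha_n\rangle$ through $\langle\alpha_1,\dots,\iota_2,\dots,\alpha_n\rangle$ composed with a smash factor that suspends $\beta_{i_0}$.

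Next I would push the problem into $\mathbb{C}P^{n-1}$. By the computation recorded just before the conjecture, $u_{n-1}(\mathbb{S}^2)\langle\iota_2,\dots,\iota_2\rangle=[j_{n-1}(\mathbb{C}),\dots,j_{n-1}(\mathbb{C})]=n!\,\gamma_{n-1}$, and by Corollary~\ref{cor.interior}(3) together with the naturality relation~\eqref{eq.int.nat} the element $u_{n-1}(\mathbb{S}^2)\langle\alpha_1,\dots,\alpha_n\rangle$ lies in $[j_{n-1}(\mathbb{C})\alpha_1',\dots]$. The key dimension count is that the target $\mathbb{C}P^{n-1}$ has cells only in even dimensions up to $2(n-1)$, while the source $\mathbb{S}^{k_1+\dots+k_n-1}$ has dimension $k_1+\dots+k_n-1\ge 2n-1+1=2n$ once some $k_{i_0}>2$ and the rest are $\ge2$; so $\pi_{k_1+\dots+k_n-1}(\mathbb{C}P^{n-1})$ receives its homotopy from $\pi_{k_1+\dots+k_n-1}(\mathbb{S}^{2n-1})$ via $\gamma_{n-1}$, and the class $u_{n-1}(\mathbb{S}^2)\langle\alpha_1,\dots,\alpha_n\rangle$ is a multiple of $\gamma_{n-1}$ composed with a stably/unstably torsion element coming from the smash of the $\beta_i$. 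One then argues that $u_{n-1}(\mathbb{S}^2)$ is an isomorphism on $\pi_j$ in the range $j\le 2(n-1)$ and, crucially, that on the relevant higher homotopy it is \emph{injective} because $J_{n-1}(\mathbb{S}^2)\to\mathbb{C}P^{n-1}$ is $2n$-connected after inverting the James–Hopf invariant $\mathcal{H}_n$ — or, more robustly, by observing that the fibre of $u_{n-1}(\mathbb{S}^2)$ is $(2n-1)$-connected (its bottom cell being in dimension $2n$), so $\pi_{k_1+\dots+k_n-1}$ is unaffected precisely when $k_1+\dots+k_n-1<2n$, i.e. exactly the range already handled without the hypothesis $k_{i_0}>2$.

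The genuinely new content, and the main obstacle, is therefore the range $k_1+\dots+k_n-1\ge 2n$, where $u_{n-1}(\mathbb{S}^2)$ need not detect the class; there the plan is to use the Whitehead identity at \emph{every} index, reducing to showing $\langle\iota_2,\dots,\iota_2\rangle\circ\Sigma^{n-1}(\beta_1\wedge\dots\wedge\beta_n)=0$ in $\pi_*(J_{n-1}(\mathbb{S}^2))$, and to split $\pi_*(J_{n-1}(\mathbb{S}^2))$ using the James filtration together with the cofibre sequence $J_{n-2}(\mathbb{S}^2)\hookrightarrow J_{n-1}(\mathbb{S}^2)\to\mathbb{S}^{2(n-1)}$ of the Proposition above. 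By downward induction on the filtration, the obstruction to triviality lives in $\pi_*(\mathbb{S}^{2(n-1)})$ or in groups of strictly smaller filtration, and one checks that the composite of $\langle\iota_2,\dots,\iota_2\rangle$ with a smash of maps, at least one of which comes from $\pi_{k_{i_0}}(\mathbb{S}^2)=\Sigma\pi_{k_{i_0}}(\mathbb{S}^1)\oplus(\text{torsion})$ with $k_{i_0}>2$, maps to zero for dimension/connectivity reasons — this is where the hypothesis $k_{i_0}>2$ enters essentially, since it forces the smash factor $\beta_1\wedge\dots\wedge\beta_n$ to be a suspension of something of positive dimension, so that the adjoint in $\pi(\Lambda(\underline{A}),\Omega J(\mathbb{S}^2))$ has a factor that dies in the relevant skeleton. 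I expect the delicate point to be the bookkeeping of exactly which James–Hopf components survive and verifying that the inductive hypothesis applies to $J_{n-2}$ with the same dimension inequality; a clean way to sidestep some of this is to invoke Theorem~\ref{thm:main}: if $\langle\alpha_1,\dots,\alpha_n\rangle$ were nonzero its obstruction would be visible in $h_m(c(F))=\Sigma(q_{m_1}\alpha_1\wedge\dots\wedge q_{m_n}\alpha_n)$ with all $m_i=1$, which forces one to analyse $\Sigma(\alpha_1\wedge\dots\wedge\alpha_n):\Sigma^{n+1}\Lambda(\underline{A})\to\Sigma(\mathbb{S}^2)^{\wedge n}=\mathbb{S}^{2n+1}$, and the hypothesis $k_{i_0}>2$ makes this smash land in a range where it is null by a direct homotopy-group argument on spheres.
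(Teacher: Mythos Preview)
The statement you are attempting to prove is listed in the paper as a \emph{Conjecture}; the authors give no proof and explicitly close the paper with it. So there is no ``paper's own proof'' to compare against, and your proposal should be read as an attack on an open problem.

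On its own merits, the proposal has a fatal gap at the very first reduction step. You invoke the Whitehead identity of Corollary~\ref{cor.interior}(4) to write $\langle\alpha_1,\dots,\alpha_n\rangle=\langle\iota_2,\dots,\iota_2\rangle\,\Sigma^{n-1}(\beta_1\wedge\dots\wedge\beta_n)$, which requires each $\alpha_i$ to have the form $\iota_2\,\Sigma\beta_i$ with $\beta_i:\mathbb{S}^{k_i-1}\to\mathbb{S}^1$. But for $k_i>2$ one has $\pi_{k_i-1}(\mathbb{S}^1)=0$, so the only $\alpha_i$ of this form is the trivial one; the Hopf map $\eta_2\in\pi_3(\mathbb{S}^2)$, for instance, is not a suspension. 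Your own parenthetical (``only when $k_i=2$'') acknowledges this, yet the next sentence applies the identity precisely at the index $i_0$ with $k_{i_0}>2$. This contradiction breaks the whole chain: the later inductive argument, the smash-factor connectivity claim, and the ``Whitehead identity at every index'' step all rest on a factorisation that does not exist.

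The appeal to Theorem~\ref{thm:main} at the end is also inverted. The element $c(F)$ is defined from a map $F$ strongly of type $(\alpha_1,\dots,\alpha_n)^{n-1}$, and such an $F$ exists only after one already knows $\langle\alpha_1,\dots,\alpha_n\rangle=0$; the theorem then computes $h_n(c(F))$ as a secondary invariant. It cannot be used as an obstruction to $\langle\alpha_1,\dots,\alpha_n\rangle$ being nonzero. Finally, your connectivity analysis of $u_{n-1}(\mathbb{S}^2)$ is honest in admitting it only covers $k_1+\dots+k_n-1<2n$, which is exactly the range the paper already disposes of in the paragraph preceding the conjecture; the hypothesis $k_{i_0}>2$ forces you into the complementary range where that map gives no information.
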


\end{document}